\theoremstyle{plain}
\newtheorem{theorem}{Theorem}[section]
\newtheorem{proposition}[theorem]{Proposition}
\newtheorem{lemma}[theorem]{Lemma}
\newtheorem{corollary}[theorem]{Corollary}
\theoremstyle{definition}
\newtheorem{definition}[theorem]{Definition}
\newtheorem{remark}[theorem]{Remark}
\newcommand{\nc}{\newcommand}
\nc{\dmo}{\DeclareMathOperator}
\nc{\Q}{\mathbb{Q}}
\nc{\F}{\mathbb{F}}
\nc{\R}{\mathbb{R}}
\nc{\Z}{\mathbb{Z}}
\nc{\C}{\mathbb{C}}
\nc{\Ell}{\mathcal{L}}
\nc{\M}{\mathcal{M}}
\nc{\K}{\mathcal{K}}
\nc{\disk}{\mathbb{D}}
\nc{\hyp}{\mathbb{H}}
\nc{\CP}{\mathbb{CP}}
\nc{\cS}{\mathcal{S}}
\dmo{\Mod}{Mod}
\dmo{\Diff}{Diff}
\dmo{\Homeo}{Homeo}
\dmo{\dist}{dist}
\dmo\BDiff{BDiff}
\dmo\SO{SO}
\dmo\Hom{Hom}
\dmo\SL{SL}
\dmo\Sp{Sp}
\dmo\rank{rank}
\dmo\sig{sig}
\dmo\Out{Out}
\dmo\Aut{Aut}
\dmo\Inn{Inn}
\dmo\GL{GL}
\dmo\PSL{PSL}
\dmo\be{\beta}
\dmo\tr{tr}
\dmo\BHomeo{BHomeo}
\dmo\EHomeo{EHomeo}
\dmo\EDiff{EDiff}
\nc\Sig{\Sigma}
\dmo\Teich{Teich}
\dmo\Fix{Fix}
\dmo{\al}{\alpha}
\nc{\pair}[1]{\langle #1 \rangle}
\nc{\abs}[1]{\left| #1 \right|}
\nc{\action}{\circlearrowright}
\nc{\norm}[1]{\left | \left | #1 \right | \right |}
\nc{\abcd}[4]{\left(\begin{array}{cc} #1 & #2 \\ #3 & #4 \end{array}\right)}
\dmo{\Isom}{Isom}
\nc{\normal}{\vartriangleleft}
\dmo{\Vol}{Vol}
\dmo{\im}{Im}
\dmo{\Push}{Push}
\dmo{\Conf}{Conf}
\dmo{\PConf}{PConf}
\dmo{\RConf}{RConf}
\dmo{\id}{id}
\dmo{\Cent}{Cent}
\dmo{\Emb}{Emb}
\dmo{\cd}{\cdots}
\dmo{\hra}{\hookrightarrow}
\dmo{\ld}{\ldots}
\dmo{\ra}{\rightarrow}
\dmo{\ti}{\times}
\dmo{\Ga}{\Gamma}
\dmo{\re}{\mathbb R}
\dmo{\sbs}{\subset}
\dmo{\pa}{\partial}
\dmo{\bs}{\backslash}
\dmo{\ga}{\gamma}
\dmo{\si}{\sigma}
\newcommand{\ca}[1]{\mathcal{#1}}
\newcommand{\bb}[1]{\mathbb{#1}}
\renewcommand{\epsilon}{\varepsilon}
\nc{\coloneq}{\mathrel{\mathop:}\mkern-1.2mu=}
\nc{\margin}[1]{\marginpar{\scriptsize #1}}
\nc{\para}[1]{\medskip\noindent\textbf{#1.}}
\title{On the non-realizability of braid groups by diffeomorphisms}
\author{Nick Salter \and Bena Tshishiku}
\email{nks@math.uchicago.edu \and benatshi@stanford.edu}
\date{\today}
\address{Department of Mathematics\\ University of Chicago\\ 5734 S. University Ave., Chicago, IL 60637}
\address{Department of Mathematics\\ Stanford University\\ 480 Serra Mall Bldg.\ 380, Stanford, CA 94305}
\begin{document}



\maketitle

\begin{abstract}
For every compact surface $S$ of finite type (possibly with boundary components but without punctures), we show that when $n$ is sufficiently large there is no lift $\sigma$ of the surface braid group $B_n(S)$ to $\Diff(S,n)$, the group of diffeomorphisms preserving $n$ marked points and restricting to the identity on the boundary. Our methods are applied to give a new proof of Morita's non-lifting theorem in the best possible range. These techniques extend to the more general setting of spaces of codimension-$2$ embeddings, and we obtain corresponding results for spherical motion groups, including the string motion group. 
\end{abstract} 


\section{Introduction}

Let $N^k$ and $M^{k+2}$ be smooth manifolds. For any $n \ge 1$ the symmetric group $S_n$ acts on  the space $\Emb_n(N, M)$ of $C^1$ embeddings $\coprod_nN\rightarrow M$ by permuting the components of $\coprod_nN$. The quotient $\Conf_n(N,M)=\Emb_n(N,M)/S_n$ is the \emph{configuration space}. The most familiar setting is for $k = 0$, so that $M=S$ is a surface and $N=\{*\}$ is a point. In this case $\Conf_n(\{*\},S)=\Conf_n(S)$ is the configuration space of $n$-tuples of distinct, unordered points on $S$, and $\pi_1\big(\Conf_n(M)\big)=:B_n(S)$ is a {\em surface braid group}. 

The group of $C^1$ diffeomorphisms\footnote{All diffeomorphisms considered in this paper will be orientation-preserving. Also, all diffeomorphisms are $C^1$ unless otherwise noted.} $\Diff(M)$ acts on $\Conf_n(N,M)$ with the stabilizer of $[\phi]$ denoted $\Diff(M,[\phi])$. Associated to this action is a homomorphism 
\[
\mathcal P: \pi_1\big(\Conf_n(N,M)\big) \to \pi_0\big(\Diff(M, [\phi])\big)
\]
generalizing the {\em point-pushing map} $\mathcal P: B_n(S) \to \Mod(S,n)$ in the surface braid group setting. See Theorem \ref{theorem:birman} and Proposition \ref{proposition:ringpush} for detailed constructions. This note focuses on the {\em non-realizability} of $\mathcal P$ by $C^1$ diffeomorphisms. We say that $\mathcal P$ is {\em realized by ($C^1$) diffeomorphisms} if there exists a homomorphism $\sigma: \pi_1\big(\Conf_n(N,M)\big) \to \Diff(M, [\phi])$ such that the composition
\[
\pi_1\big(\Conf_n(N,M)\big) \xrightarrow\sigma \Diff(M, [\phi]) \to \pi_0\big(\Diff(M, [\phi])\big)
\]
is equal to $\mathcal P$. Such a $\sigma$, if it exists, is called a {\em lift} of $\mathcal P$.

Bestvina--Church--Souto \cite{bcs} show by a cohomological argument that $B_n(S)$ is not realized by diffeomorphisms when $S$ is closed, $\text{genus}(S)\ge2$, and $n\ge1$ $\big($note that $B_1(S)\cong\pi_1(S)\big)$. It does not seem that their methods extend to surfaces with boundary or to surfaces of low genus. In particular, this leaves the case of the classical braid group $B_n = B_n(\disk^2)$ unresolved. 

Morita's non-lifting theorem \cite{morita_nonlifting} shows that there is no lift of $\Mod(\Sigma_g) = \pi_0\big(\Diff(\Sigma_g)\big)$ to the group of $C^2$ diffeomorphisms $\Diff^2(\Sigma_g)\sbs\Diff(\Sigma_g)$ by showing that $H^*(\Mod(\Sigma_g)) \to H^*\big(\Diff^2(\Sigma_g)\big)$ fails to be injective for $g$ sufficiently large. It is tempting to try and follow this strategy for $B_n$, exploiting the fact that $B_n = \pi_0\big(\Diff(\disk^2, n)\big)$. However, there is evidence that this approach will not work, as Nariman \cite{nariman} has shown that $H^*(B_n;\Z)$ is a direct summand of $H^*(\Diff(\disk^2\setminus X_n);\Z)$, where $X_n\sbs\disk^2$ is a set of $n$ distinct points and $\Diff(\disk^2\setminus X_n)$ is the group of compactly supported diffeomorphisms of $\disk^2\setminus X_n$. 
We are able to sidestep these difficulties by using more geometric methods.

\begin{theorem}\label{theorem:surfaces}
Let $S$ be a compact surface. If $\partial S = \varnothing$ then $\mathcal P:B_n(S)\rightarrow\Mod(S,n)$ is not realized by $C^1$ diffeomorphisms for all $n \ge 6$. In the case $\partial S \ne \varnothing$, this can be improved to all $n \ge 5$.
\end{theorem}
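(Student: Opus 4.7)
The plan is to suppose a lift $\sigma : B_n(S) \to \Diff(S, n)$ of $\mathcal{P}$ exists and derive a contradiction by extracting a low-dimensional linear representation of a large subgroup of $B_n(S)$ from the derivative of $\sigma$ at an auxiliary fixed point, then invoking the rigidity of small-dimensional representations of the braid group.

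First I would arrange the marked points to admit a finite-order symmetry. Placing the $n$ points at the vertices of a regular $n$-gon inside an embedded disk $D \subset S$ produces an element $\bar r \in \Mod(S, n)$ of order $n$, and together with $B_n(S)$ this $\bar r$ generates a virtual extension $\tilde G \leq \Mod(S, n)$. A Nielsen-realization-type construction on the interior of $S$ (Kerckhoff's theorem when $\partial S = \varnothing$, with a compactly-supported interpolation near $\partial S$ otherwise) would realize $\bar r$ by a diffeomorphism $r \in \Diff(S, n)$ with an isolated interior fixed point $c$ away from the marked points. After modifying $\sigma$ by an inner automorphism of $\Diff(S,n)$ we may arrange that $r \sigma(\gamma) r^{-1} = \sigma(\bar r \gamma \bar r^{-1})$ for $\gamma \in B_n(S)$, extending $\sigma$ to a lift $\tilde \sigma : \tilde G \to \Diff(S, n)$.

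Next I would look at the stabilizer $\Gamma$ of $c$ inside the resulting extension $\langle \sigma(B_n(S)), r \rangle \leq \Diff(S,n)$ and differentiate at $c$ to obtain a representation $D_c : \Gamma \to GL_2(\R)$. By construction $D_c(r)$ is a non-trivial rotation by $2\pi/n$. On the other hand, by the classification of low-dimensional linear representations of braid groups (Formanek, for $n \geq 5$), $D_c$ restricted to $\sigma(B_n(S)) \cap \Gamma$ must factor through a strongly restricted quotient -- essentially the abelianization. Matching these two constraints through the conjugation relations in $\tilde G$ (in particular the identities relating $\bar r$ with half-twists cyclically permuted by the $n$-gon symmetry) produces an explicit algebraic inconsistency. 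The thresholds $n \geq 5$ (boundary) and $n \geq 6$ (closed) enter both from Formanek's bound and from the need for enough marked points to sustain the requisite symmetry.

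The hard part will be two-fold. First, on surfaces with boundary -- and most critically on $S = \disk^2$ -- there are no non-trivial finite-order diffeomorphisms fixing $\partial S$ pointwise, so $r$ cannot literally be periodic; one must substitute a lift that is periodic in the interior and track the rotation-number discrepancy absorbed into the boundary interpolation. Second, even after $r$ is constructed, the point $c$ is a priori fixed only by $r$ itself, not by $\sigma(\gamma)$ for $\gamma \in B_n(S)$, so the subgroup $\Gamma$ must be identified geometrically -- most naturally by treating $c$ as an $(n+1)$-st auxiliary marked point and using the Birman exact sequence to extract the point-pushing subgroup of $B_n(S \setminus \{c\})$ as the effective domain of $D_c$. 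Verifying that this subgroup is large enough to trigger the Formanek obstruction, while small enough to fit coherently with the rotational symmetry $\bar r$, is the main technical content of the argument.
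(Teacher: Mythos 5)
Your proposal does not match the paper's approach and contains a critical gap. The step ``after modifying $\sigma$ by an inner automorphism of $\Diff(S,n)$ we may arrange that $r\sigma(\gamma)r^{-1} = \sigma(\bar r \gamma \bar r^{-1})$'' is unjustified: a lift of $\mathcal P$ is only defined on $B_n(S)$, and there is no reason an arbitrary lift should intertwine with a geometrically realized rotation. Conjugating $\sigma$ by a single inner automorphism gives one conjugation relation, not equivariance with an entire cyclic group. Moreover, even if you could arrange this, the stabilizer $\Gamma$ of $c$ inside $\sigma(B_n(S))$ has no a priori size — it could be trivial — so you have no guarantee of a large subgroup on which to run the Formanek argument. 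You also face the problem that the regular-$n$-gon symmetry does not exist as a mapping class for arbitrary $(S,X_n)$, and you acknowledge it breaks entirely for $S=\disk^2$. Finally, Formanek's classification of low-dimensional representations by itself does not produce a contradiction: you would need to show incompatibility with all representations on the Formanek list, and it is unclear what property of your $D_c$ would do that.

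The paper sidesteps all of this with two much simpler ideas that your proposal is missing. First, rather than manufacturing an auxiliary fixed point via a rotation, the paper uses the \emph{marked points themselves}: by Paris--Rolfsen, $B_{n-1} \le B_n(S)$, and this subgroup automatically fixes the $n$-th marked point $x$, so the derivative representation $D_x: B_{n-1} \to \GL_2^+(\R)$ is always available (no Nielsen realization, no equivariance problem, and it works uniformly on all surfaces including the disk). Second, the contradiction mechanism is not Formanek but the \emph{Thurston stability theorem} combined with \emph{perfectness of $[B_n,B_n]$} (Gorin--Lin): a short elementary lemma about commuting conjugate generators in $\PSL_2(\R)$ (Lemma \ref{lemma:nonabelian}) shows $D_x$ has abelian image, hence $[B_{n-1},B_{n-1}] \le \ker D_x$ acts with trivial derivative at $x$; Thurston stability then forces $[B_{n-1},B_{n-1}]$ to be locally indicable, contradicting that it is a nontrivial finitely generated perfect group for $n-1 \ge 5$. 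Your proposal contains no analogue of either local indicability or the perfectness obstruction, which is the actual engine of the proof.
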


In Section \ref{section:morita}, we use Theorem \ref{theorem:surfaces} to give a new proof of Morita's non-lifting theorem. 

\begin{theorem}\label{theorem:morita}
Let $\Sigma_g$ be a closed surface of genus $g$. For $g \ge 2$, there is no homomorphism $\Mod(\Sigma_g) \to \Diff(\Sigma_g)$ which splits the natural projection $\Diff(\Sigma_g) \to \Mod(\Sigma_g)$. 
\end{theorem}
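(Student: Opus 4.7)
The plan is to derive Theorem~\ref{theorem:morita} from Theorem~\ref{theorem:surfaces} by contradiction. Suppose that a splitting $\rho: \Mod(\Sigma_g) \to \Diff(\Sigma_g)$ exists for some $g \ge 2$. I would use $\rho$ to manufacture a lift $\tilde\sigma: B_n(\Sigma_g) \to \Diff(\Sigma_g, n)$ of the point-pushing map $\mathcal{P}$ for some $n \ge 6$, in contradiction with the closed-surface case of Theorem~\ref{theorem:surfaces}.

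To construct $\tilde\sigma$, I would exploit the $\Mod(\Sigma_g)$-action on $\Sigma_g$ induced by $\rho$, and hence the induced action on $\Conf_n(\Sigma_g)$. Via the Borel construction this gives a flat $\Sigma_g$-bundle $E \to B\Mod(\Sigma_g)$ whose structure group is reduced all the way to $\Diff(\Sigma_g)$, and the fiberwise unordered configuration space $E_n \to B\Mod(\Sigma_g)$ has total space homotopy equivalent to $B\Mod(\Sigma_g, n)$ by the Birman exact sequence together with asphericity of $\Conf_n(\Sigma_g)$ for $g \ge 1$. Because the structure group is $\Diff(\Sigma_g)$ rather than merely $\Mod(\Sigma_g)$, one obtains a comparison map $E_n \to B\Diff(\Sigma_g, n)$, and restricting to the kernel $B_n(\Sigma_g) \subseteq \pi_1 E_n = \Mod(\Sigma_g, n)$ of the Birman projection should produce the desired $\tilde\sigma$.

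I expect the main difficulty to be promoting the bundle-theoretic picture to an honest group homomorphism $\tilde\sigma$. A purely topological section is already available because $\Diff_0(\Sigma_g)$ is contractible for $g \ge 2$ (Earle--Eells), but this is not enough to lift $\mathcal{P}$ as a group homomorphism; the additional rigidity must come from the multiplicativity of $\rho$, which constrains the choice of diffeomorphism representing each mapping class in a compatible way across compositions. Verifying that $\tilde\sigma(\beta_1\beta_2) = \tilde\sigma(\beta_1)\tilde\sigma(\beta_2)$ will be the heart of the argument, using the homomorphism property of $\rho$ to cancel ambiguities left by the contractibility of $\Diff_0(\Sigma_g, n)$. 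Once $\tilde\sigma$ is established as a group homomorphism lifting $\mathcal{P}$, Theorem~\ref{theorem:surfaces} applied with $n \ge 6$ immediately yields the contradiction and completes the proof.
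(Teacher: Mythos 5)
Your plan is to deduce Theorem~\ref{theorem:morita} from Theorem~\ref{theorem:surfaces}, but this reduction does not go through, and the paper's proof takes a genuinely different route that reuses the \emph{tools} of Theorem~\ref{theorem:surfaces} rather than its \emph{statement}.

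The fatal problem with your proposal is that a splitting $\rho: \Mod(\Sigma_g) \to \Diff(\Sigma_g)$ contains no information about the point-pushing subgroup. The image of $\mathcal{P}: B_n(\Sigma_g) \to \Mod(\Sigma_g,n)$ is precisely the kernel of the forgetful map $\Mod(\Sigma_g, n) \to \Mod(\Sigma_g)$; every point-pushing mapping class projects to the identity in $\Mod(\Sigma_g)$. So when you try to ``use the multiplicativity of $\rho$ to cancel ambiguities,'' you find that $\rho$ simply has nothing to say about the classes you want to lift. Your Borel construction reflects the same vacuity: since $\Diff_0(\Sigma_g)$ and $\Diff_0(\Sigma_g, X_n)$ are contractible for $g \ge 2$ (Earle--Eells), one has $B\Diff(\Sigma_g, n) \simeq B\Mod(\Sigma_g, n)$, so any map of spaces $E_n \to B\Diff(\Sigma_g,n)$ records on $\pi_1$ only a homomorphism $\Mod(\Sigma_g,n) \to \pi_0\Diff(\Sigma_g, n)$, i.e.\ essentially $\mathcal{P}$ itself and not a lift of it. You correctly flag ``promoting the bundle-theoretic picture to an honest group homomorphism'' as the crux, but you should recognize that this is not a technical difficulty awaiting a clever argument: the input data (a splitting over $\Mod(\Sigma_g)$) is simply insufficient to constrain a choice of diffeomorphism representing each point-push.

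The paper's proof of Theorem~\ref{theorem:morita} instead manufactures a \emph{finite global orbit} for a braid-like subgroup of $\Mod(\Sigma_g)$, which plays the role that the marked point set plays in Theorem~\ref{theorem:surfaces}. Concretely: if $\sigma$ is a hypothetical realization, a Lefschetz argument shows the fixed set of $\sigma(\iota)$ ($\iota$ the hyperelliptic involution) consists of exactly $2g+2$ points; the centralizer $C(\iota)$ preserves this set, and $C(\iota)$ contains a quotient $B$ of $B_{2g+2}$ generated by the Dehn twists $T_{c_1}, \dots, T_{c_{2g+1}}$ about a chain of $\iota$-invariant curves. One then checks (Lemma~\ref{lemma:action}) that the induced permutation action of $B$ on $\Fix(\sigma(\iota))$ is the standard one, so the image $B'$ of $B_{2g+1}$ has a global fixed point $p$. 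At this point the argument of Theorem~\ref{theorem:surfaces} is replayed verbatim: the derivative representation $D_p \circ \sigma: B' \to \GL_2^+(\R)$ has abelian image by Lemma~\ref{lemma:nonabelian}, so a finitely generated perfect subgroup of $B'$ (which exists by Proposition~\ref{theorem:gorinlin} and Remark~\ref{remark:quotients}) fixes $T_p\Sigma_g$ and contradicts Thurston stability. So the paper's proof is parallel to Theorem~\ref{theorem:surfaces} in method but is not a consequence of it; the entirely new ingredient is the use of the hyperelliptic involution to produce a finite invariant set, and that is the idea missing from your proposal.
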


Morita's original argument \cite{morita_nonlifting} showed there is no splitting $\Mod(\Sigma_g)\to \Diff^2(\Sigma_g)$ for $g\ge18$. This was improved by Franks--Handel \cite{frankshandel}, who obtained the nonlifting theorem for $C^1$ diffeomorphisms and $g\ge3$; see also Bestvina--Church--Souto \cite[Theorem 1.2]{bcs}. Theorem \ref{theorem:morita} provides a further improvement, giving the best possible genus bound while avoiding the dynamical machinery lurking in the proof of Franks--Handel.

\begin{remark} \label{remark:homeo} Much less is understood about realizing $B_n(S)$ by homeomorphisms. Thurston showed that $B_3$ {\em is} realized by homeomorphisms \cite{thurston_mo}. In contrast, $B_6(S^2)$ is not realized by homeomorphisms (for otherwise, one could lift this realization to the branched cover $\Sigma_2\rightarrow S^2$ to obtain a realization of $\Mod(\Sigma_2)$ by homeomorphisms, and this is impossible by work of Markovic--\v{S}ari{\'c} \cite{markovicsaric}, building on the ideas of Markovic \cite{markovic}). 
\end{remark}

Along with surface braid groups, we will also be concerned with the space $\Conf_n(S^k,M)$ of configurations of unlinked, codimension-2 spheres in $M\in\{\R^{k+2},S^{k+2}\}$ for $k\ge1$. The fundamental group $B_n(S^k,M)  = \pi_1\big(\Conf_n(S^k,\re^{k+2})\big)$ is called the \emph{spherical motion group}. In the case $k=1$, this group is closely related to the {\em ring group} studied by Brendle and Hatcher in \cite{brendle_hatcher} (see Section \ref{section:extension}). The main result is as follows. 

\begin{theorem}\label{theorem:spheres}
Let $M$ be $S^{k+2}$ or $\R^{k+2}$. Fix an unlinked embedding $\phi:\coprod_nS^k\hookrightarrow M$, and let $[\phi]\in\Conf_n(S^k,M)$ denote the corresponding configuration. Let $\ca D(M,[\phi])\le\Diff(M)$ be the group of compactly-supported $C^1$ diffeomorphisms isotopic the identity and such that $[f\circ\phi]=[\phi]$. If either
\begin{enumerate}[(a)]
\item $M=\R^{k+2}$ and $n\ge5$, or 
\item $M=S^{k+2}$ and $n\ge6$, 
\end{enumerate} then the ``spherical push map'' $\mathcal P: B_n(S^k,M) \to \pi_0\big(\ca D(M,\phi)\big)$ is not realized by diffeomorphisms.
\end{theorem}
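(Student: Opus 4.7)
The plan is to deduce Theorem \ref{theorem:spheres} from Theorem \ref{theorem:surfaces} by restricting to a $2$-dimensional slice through the configuration. Arrange the unlinked embedding $\phi$ so that there is a smoothly embedded surface $\Sigma \subset M$---a disk $D^2$ if $M = \R^{k+2}$, a sphere $S^2$ if $M = S^{k+2}$---meeting each component $\phi(S^k_i)$ transversally in a single point $p_i$. Set $X_n = \{p_1, \ldots, p_n\} \subset \Sigma$. The hypotheses $n \ge 5$ (resp.\ $n \ge 6$) in Theorem \ref{theorem:spheres} match precisely the boundary case (resp.\ closed case) of Theorem \ref{theorem:surfaces} applied to $\Sigma$, so it suffices to extract from a hypothetical lift of $\mathcal P$ a lift of the surface push map $\mathcal P_\Sigma : B_n(\Sigma) \to \pi_0(\Diff(\Sigma, X_n))$.

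First I would construct a ``thickening'' homomorphism $\iota : B_n(\Sigma) \to B_n(S^k, M)$ that promotes surface braids to sphere motions. Fix a tubular neighborhood $\nu(\Sigma) \cong \Sigma \times \R^k$ chosen so that each $\phi(S^k_i)$ lies in the fiber over $p_i$. A loop $\beta$ in $\Conf_n(\Sigma)$ lifts canonically to a loop of unlinked spheres in $M$ by parallel-transporting the $i$-th sphere along the path traced by $p_i$, yielding $\iota(\beta)$. The same construction provides an extension-by-identity homomorphism $E : \Diff(\Sigma, X_n) \to \ca D(M, \phi)$, and comparing the two push maps via isotopy extension gives $E_* \circ \mathcal P_\Sigma = \mathcal P \circ \iota$ at the level of $\pi_0$.

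Assume for contradiction that $\sigma : B_n(S^k, M) \to \ca D(M, \phi)$ lifts $\mathcal P$. The composite $\sigma \circ \iota$ is a homomorphism $B_n(\Sigma) \to \ca D(M, \phi)$ whose image in $\pi_0$ agrees with $E_* \circ \mathcal P_\Sigma$. To conclude, I would show that the inclusion of the subgroup $\ca D(M, \phi, \Sigma) \le \ca D(M, \phi)$ of diffeomorphisms preserving $\Sigma$ setwise is a weak equivalence on the relevant path components. The key input is contractibility of the space of smoothly embedded copies of $\Sigma$ in $M$ that are transverse to $\phi$ and meet each $\phi(S^k_i)$ in a single point---proved by deformation retracting onto the convex space of ``round'' affine slices through the sphere centers. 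Granted this, one lifts $\sigma \circ \iota$ to a homomorphism into $\ca D(M, \phi, \Sigma)$ and restricts to $\Sigma$ to obtain a homomorphism $\tilde\sigma : B_n(\Sigma) \to \Diff(\Sigma, X_n)$ splitting $\mathcal P_\Sigma$, contradicting Theorem \ref{theorem:surfaces}.

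The main obstacle is this last functoriality step: turning the pointwise ``canonical isotopy returning $\sigma(\iota(\beta))(\Sigma)$ to $\Sigma$'' into a lift that is a genuine group homomorphism and not merely a map of sets. I expect this to follow from the contractibility statement together with a standard obstruction-theoretic argument comparing the classifying spaces of $\ca D(M, \phi, \Sigma)$ and $\ca D(M, \phi)$, but carrying it out cleanly requires care to ensure the modification is compatible with composition.
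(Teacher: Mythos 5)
Your reduction to the surface case via a transverse $2$-dimensional slice is a natural idea, and the setup (constructing $\iota : B_n(\Sigma) \to B_n(S^k,M)$ and $E : \Diff(\Sigma, X_n) \to \ca D(M,[\phi])$ with $E_* \circ \mathcal P_\Sigma = \mathcal P \circ \iota$) is plausible. But the step you yourself flag as the main obstacle is a genuine gap, and I do not believe it can be filled along the lines you indicate. A weak homotopy equivalence $\ca D(M,[\phi],\Sigma) \hookrightarrow \ca D(M,[\phi])$ --- even granting contractibility of the space of transverse slices, which is far from clear once one allows arbitrary embedded surfaces rather than round affine ones --- controls maps between classifying spaces, i.e.\ maps up to homotopy. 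It gives no mechanism for replacing the honest group homomorphism $\sigma \circ \iota$ by an honest homomorphism valued in the subgroup $\ca D(M,[\phi],\Sigma)$. The distinction between a map $B_n(\Sigma) \to B\Diff(\Sigma, X_n)$ (which always exists, since $\mathcal P_\Sigma$ classifies a bundle) and a genuine group-theoretic splitting is precisely the content of Theorem \ref{theorem:surfaces}; no obstruction-theoretic comparison of classifying spaces can recover the latter from the former. In other words, the ``modification compatible with composition'' you hope for is itself a lifting problem of exactly the type the theorem forbids, so the proposed strategy is circular.

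The paper avoids the slice entirely and works in the ambient $(k{+}2)$-manifold. It produces a braid subgroup of $B_n(S^k,M)$ directly (Proposition \ref{proposition:hasbraids}, via the Brendle--Hatcher wicket space and the Artin representation on $\pi_1(M\setminus\im\phi)\cong F_n$), and then applies Thurston stability in the ambient dimension. For $M = \R^{k+2}$, noncompactness gives every finitely generated subgroup of $\Diff_c(\R^{k+2})$ a common fixed point with trivial derivative (Corollary \ref{cor:ts}), so $\ca D(M,[\phi])$ is locally indicable, contradicting perfectness of $[B_n,B_n]$. For $M = S^{k+2}$, one passes to $B_{n-1} \le B_n$ whose image fixes a component of $\im\phi$ pointwise; the derivative at a point of that sphere has the block-triangular form analyzed in Proposition \ref{proposition:B}, and the argument reduces to the $\GL_2^+(\R)$ non-abelian representation criterion of Lemma \ref{lemma:nonabelian}. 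This is why the bound degrades from $n \ge 5$ to $n \ge 6$ in the $S^{k+2}$ case: one strand is spent producing a pointwise-fixed sphere, not a $2$-disk versus $2$-sphere dichotomy as in your slice picture.
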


\begin{remark}
The arguments of Theorems \ref{theorem:surfaces} and \ref{theorem:spheres} can be extended to certain finite-index subgroups, but do not work, e.g. for the pure braid group $PB_n \le B_n(\disk^2)$. It is also not clear whether the bounds in Theorem \ref{theorem:surfaces} or Theorem \ref{theorem:spheres} can be improved, although the methods of the current paper do not extend beyond the stated ranges. See Remark \ref{remark:homeo} for some related discussions.
\end{remark}

In Theorem \ref{theorem:spheres}, the diffeomorphism groups under consideration are required to fix the image of $\phi$ pointwise up to permutation. In Section \ref{section:extension}, we use work of Parwani \cite{parwani} to give an extension of Theorem \ref{theorem:spheres} that deals with the possibility of a lift of $\mathcal{P}$ that only fixes the image of $\phi$ {\em setwise}, in the case $k = 1$. We also treat a generalization of Theorem \ref{theorem:surfaces}, where the marked points on $S$ are replaced by boundary components.\\

The proof of Theorems \ref{theorem:surfaces} and \ref{theorem:spheres} involves two main ingredients. The first is the Thurston stability theorem \cite{thurston_stability}, which can be used to impose restrictions on the homology of finitely-generated subgroups of diffeomorphisms. The second is the fact that $B_n$ interacts poorly with these restrictions. The main theorems are proved by exhibiting suitable subgroups closely related to $B_n$ in each of the braid or motion groups under consideration.

The paper is organized as follows. In Section \ref{section:surfacepush}, we briefly review Birman's theory of push maps for surface braid groups. In Sections \ref{section:surfaceproof} and \ref{section:morita} we prove Theorems \ref{theorem:surfaces} and \ref{theorem:morita}, respectively. In Section \ref{section:spherepush} we develop a notion of push maps for spherical motion groups. In Section \ref{section:sphereproof} we prove Theorem \ref{theorem:spheres}. Finally in Section \ref{section:extension}, we prove some strengthenings of Theorems \ref{theorem:surfaces} and \ref{theorem:spheres} in low dimensions. 
\vspace{.1in} 

\noindent{\bf Acknowledgements.} The authors wish to thank their advisor B.\ Farb for his guidance and support and for extensive comments on drafts of this paper. The authors express their gratitude to the anonymous referee for numerous improvements, and in particular for identifying the suitability of our methods for giving a new proof of the Morita non-lifting theorem. The authors thank I.\ Agol for remarking to them that $B_6(S^2)$ is not realized by homeomorphisms and A.\ Hatcher for suggesting the proof of Proposition \ref{proposition:hasbraids}. Finally, the authors thank J.\ Bowden, A.\ Hatcher, D.\ Margalit, and A. Putman for several valuable comments. 

\section{From configuration spaces to mapping class groups}\label{section:surfacepush}

In this section, we review how surface braid groups give rise to subgroups of mapping class groups via {\em push maps}.  Let $S$ be a surface. The {\em pure configuration space of $n$ points in $S$} is defined as
\[
\PConf_n(S) = \{(x_1, \dots, x_n) \mid x_i\in\text{int}(S)\text{ and } x_i \ne x_j \mbox{ if } i \ne j\}.
\]
The {\em configuration space} is defined as the quotient $\Conf_n(S) = \PConf_n(S) / S_n$ by the (free) action of the symmetric group on $n$ letters via permutation of coordinates. 
\begin{definition}
The {\em braid group on $n$ strands in $S$}, written $B_n(S)$, is defined to be $\pi_1\big(\Conf_n(S)\big)$. In the case $S = \disk^2$, we write $B_n = B_n(\disk^2)$. 
\end{definition}

The following is due to J. Birman. See \cite[Section 9.1.4]{fm}. 

\begin{theorem}[Birman]\label{theorem:birman}
Let $S$ be a compact surface with possibly nonempty boundary. Let $X_n = \{x_1, \dots, x_n\}$ be a set of $n$ distinct points in $S$. There is a homomorphism
\[
\mathcal P: B_n(S) \to \pi_0\big(\Diff(S, \partial S, X_n)\big);
\]
here $\Diff(S, \partial S, X_n)$ is the group of $C^1$ diffeomorphisms of $S$ restricting to the identity on $\partial S$ that preserve $X_n$ setwise. The kernel of $\mathcal P$ is isomorphic to a quotient of $\pi_1\big(\Diff(S, \partial S)\big)$. 
\end{theorem}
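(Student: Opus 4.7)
The plan is to realize $\mathcal{P}$ as the connecting homomorphism in the long exact sequence of homotopy groups for an appropriate fibration. Specifically, consider the evaluation map
\[
\mathrm{ev}: \Diff(S, \partial S) \to \Conf_n(S), \qquad \phi \mapsto \{\phi(x_1), \ldots, \phi(x_n)\}.
\]
The fiber over the basepoint $X_n$ is precisely $\Diff(S, \partial S, X_n)$, so the first order of business is to verify that $\mathrm{ev}$ is a fibration, or better, a locally trivial principal $\Diff(S, \partial S, X_n)$-bundle.

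To establish the fibration property, I would appeal to a parameterized version of the smooth isotopy extension theorem: given any smooth path $\gamma: [0,1] \to \Conf_n(S)$ with $\gamma(0) = X_n$, one can produce a smooth path $\widetilde{\gamma}: [0,1] \to \Diff(S, \partial S)$ with $\widetilde{\gamma}(0) = \mathrm{id}$ and $\mathrm{ev}\circ \widetilde{\gamma} = \gamma$, by extending a time-dependent vector field supported on disjoint disks about $\gamma(t)$ to a compactly supported vector field on $S \setminus \partial S$, then integrating. A partition-of-unity construction upgrades this to continuous families, which produces the local sections of $\mathrm{ev}$ required for local triviality.

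With the fibration in hand, the tail of the long exact sequence of homotopy groups reads
\[
\pi_1\bigl(\Diff(S, \partial S)\bigr) \to \pi_1\bigl(\Conf_n(S)\bigr) \xrightarrow{\ \partial\ } \pi_0\bigl(\Diff(S, \partial S, X_n)\bigr) \to \pi_0\bigl(\Diff(S, \partial S)\bigr).
\]
Defining $\mathcal{P} := \partial$ and identifying $\pi_1(\Conf_n(S)) = B_n(S)$ gives a homomorphism of the desired form, and exactness at $\pi_1(\Conf_n(S))$ yields immediately that $\ker(\mathcal{P})$ is the image of $\pi_1(\Diff(S, \partial S))$, hence a quotient of $\pi_1(\Diff(S, \partial S))$.

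Finally, I would check that this $\partial$ agrees with the point-pushing description: given a loop $\gamma$ in $\Conf_n(S)$ representing $\beta \in B_n(S)$, lift $\gamma$ to a path $\widetilde{\gamma}$ in $\Diff(S, \partial S)$ starting at $\mathrm{id}$ via the fibration; then $\widetilde{\gamma}(1) \in \Diff(S, \partial S, X_n)$, and by construction, it drags the marked points around the strands of $\beta$, matching the classical description of the point-pushing homeomorphism. The main technical obstacle is the fibration claim above; once that is in place, the remainder is formal manipulation of the long exact sequence.
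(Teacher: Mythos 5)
Your argument is correct and follows the standard route, which is precisely the one the paper uses: it cites Birman (via Farb--Margalit, Section 9.1.4) for this statement, and for the analogous Proposition \ref{proposition:ringpush} it runs exactly your construction --- the evaluation map $\Diff \to \Conf_n$ is a fibration (citing Palais for the local sections rather than re-deriving them from isotopy extension), and $\mathcal P$ is the connecting map in the resulting long exact sequence, with the kernel statement falling out of exactness. No substantive differences.
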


\begin{remark}\label{remark:nocontain}
The condition $\pi_1\big(\Diff(S, \partial S)\big) = 1$ is satisfied whenever $\chi(S) < 0$, and also when $S = \disk^2$ (see \cite{ee} and \cite{es}). In the exceptional cases, $\pi_1\big(\Diff(S^2)\big) \cong \Z / 2$, and $\pi_1\big(\Diff(T^2)\big) \cong \Z^2$. It follows that for all $n \ge 5$ (the cases under consideration in this paper), the map $\mathcal P$ is nonzero.
\end{remark}

\section{Proof of Theorem \ref{theorem:surfaces}} \label{section:surfaceproof}

The situation can be expressed diagrammatically as follows:
\[
\xymatrix{												& \Diff(S, \partial S, X_n) \ar[d]^\pi\\
B_n(S) \ar[r]_>>>>{\mathcal {P}}	\ar@{-->}[ur]^{\sigma}	& \pi_0\big( \Diff(S, \partial S, X_n)\big).
}
\]
We seek to obstruct the existence of a homomorphism $\sigma$ lifting $\mathcal P$. Our method will be to reduce to the {\em Thurston stability theorem}.

\para{Step 1: Local indicability and the Thurston stability theorem}
The aim of this section is to show that certain diffeomorphism groups do not contain braid subgroups. We will be concerned with a property of groups known as {\em local indicability}.

\begin{definition}\label{definition:locindic}
A group $G$ is said to be {\em locally indicable} if every nontrivial finitely-generated subgroup $\Gamma \le G$ admits a surjection $\Gamma \to \Z$. Equivalently, $G$ is locally indicable if every finitely-generated subgroup $\Gamma$ has $H^1(\Gamma, \R) \ne 0$.

A group $G$ is said to be {\em strongly non-indicable} if there exists a nontrivial finitely-generated subgroup $\Gamma$ that is perfect, i.e.\ with $[\Gamma, \Gamma] = \Gamma$. 
\end{definition}

\begin{remark}\label{remark:quotients}
Suppose $G$ is not locally indicable, and let $H \le G$ be a subgroup witnessing this fact. If $N \normal G$ is a normal subgroup with $H \cap N \ne H$, then $HN / N$ witnesses the non-indicability of $G/ N$. The same is true for strong non-indicability. 
\end{remark}

In \cite{thurston_stability}, Thurston showed that certain diffeomorphism groups are locally indicable. 

\begin{theorem}[Thurston stability theorem] \label{theorem:thurston_stability}
Let $M$ be a manifold, and let $x \in M$ be given. For a diffeomorphism $g$ of $M$ fixing $x$, we write $(Dg)_x \in GL(T_x M)$  for the derivative. Then the group
\[
\mathcal G = \{g \in \Diff(M) \mid g(x) = x,\> (Dg)_x =I \}
\]
is locally indicable (and hence any subgroup of $\mathcal{G}$ is locally indicable as well). 
\end{theorem}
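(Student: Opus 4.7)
The plan is to prove that every nontrivial finitely generated subgroup $\Gamma \leq \mathcal{G}$ admits a nonzero homomorphism $\rho : \Gamma \to \mathbb{R}^{\dim M}$. Composing with an appropriate coordinate projection then yields a nonzero homomorphism $\Gamma \to \mathbb{R}$, which for a finitely generated group forces the abelianization $\Gamma^{\mathrm{ab}}$ to have positive rank, hence produces a surjection $\Gamma \twoheadrightarrow \mathbb{Z}$, as required. Fix generators $g_1, \ldots, g_k$ of $\Gamma$ and work in local coordinates centered at $x$, identifying $x$ with $0 \in \mathbb{R}^{\dim M}$. The hypothesis $(Dg_i)_0 = I$ allows us to write $g_i(y) = y + \epsilon_i(y)$ with $\|\epsilon_i(y)\|/\|y\| \to 0$ as $y \to 0$. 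One may assume $\Gamma$ acts nontrivially on every neighborhood of $x$; otherwise, replace $x$ with a point on the boundary of the fixed set of $\Gamma$ and adapt the argument below.

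The construction of $\rho$ is a blow-up at $x$. I would choose a sequence of points $p_n \to 0$ together with rescaling factors $\lambda_n > 0$ so that, after passing to a subsequence, the rescaled displacements
$$
w_i := \lim_{n \to \infty} \frac{\epsilon_i(p_n)}{\lambda_n} \in \mathbb{R}^{\dim M}
$$
all exist and are not simultaneously zero. The natural normalization is $\lambda_n = \max_i \|\epsilon_i(p_n)\|$, combined with a maximality condition on $p_n$ ensuring that $\lambda_n$ is, up to a bounded factor, as large as any generator displacement on the ball $B(0, \|p_n\|)$. Since $\epsilon_i(y) = o(\|y\|)$, this automatically forces $\lambda_n/\|p_n\| \to 0$, so the generators are being inspected at a scale infinitesimally smaller than the distance from $x$.

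The heart of the argument is to show that the assignment $g_i \mapsto w_i$ extends to a well-defined homomorphism on $\Gamma$. The key identity is
$$
(g_i \circ g_j)(p_n) - p_n \;=\; \epsilon_i\big(g_j(p_n)\big) + \epsilon_j(p_n).
$$
Dividing by $\lambda_n$, the second term tends to $w_j$, while the first tends to $w_i$ provided $\epsilon_i(g_j(p_n)) - \epsilon_i(p_n) = o(\lambda_n)$; this follows from $g_j(p_n) = p_n + O(\lambda_n)$, the vanishing of $D\epsilon_i$ at $0$, and a mean-value estimate using $\lambda_n \ll \|p_n\|$. The main obstacle is making these estimates uniform across \emph{all} words in the generators simultaneously, so that $\rho$ descends from the free group $F_k$ to $\Gamma$. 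I would handle this via an Arzel\`a--Ascoli argument: the rescaled germs $z \mapsto \epsilon_i(p_n + \lambda_n z)/\lambda_n$ form an equicontinuous family (since $D\epsilon_i \to 0$ near $0$) and, after diagonal subsequence extraction, converge uniformly on compact sets to the constant maps $z \mapsto w_i$. Taking limits of rescaled compositions then upgrades additivity on generators into additivity on arbitrary words, and any word representing the identity in $\Gamma$ has vanishing rescaled germ, forcing $\rho$ to descend as required.
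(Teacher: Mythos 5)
The paper does not prove this theorem: it is stated as Thurston's stability theorem and cited directly from \cite{thurston_stability}, so there is no in-paper argument to compare against. Your sketch, however, is a faithful rendering of Thurston's original strategy: rescale the generator displacements along a sequence $p_n\to x$, extract a limit $g_i\mapsto w_i\in\R^{\dim M}$ normalized so that $\max_i|w_i|=1$, use the $C^1$ hypothesis $D\epsilon_i\to 0$ (via a mean-value or, as you prefer, an Arzel\`a--Ascoli compactness estimate on the rescaled germs) to see that this is additive along words and vanishes on relators, and then project to get a nonzero homomorphism $\Gamma\to\R$, whence a surjection $\Gamma\twoheadrightarrow\Z$. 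All of this is the right idea and you identify the correct key estimate, namely that $\epsilon_i(q)-\epsilon_i(p_n)=o(\lambda_n)$ when $|q-p_n|=O(\lambda_n)$, which follows from the continuity of $D\epsilon_i$ at $0$ and does not require $C^2$.

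The one step that is imprecise is the reduction to the case where $\Gamma$ acts nontrivially near the fixed point. You say ``replace $x$ with a point on the boundary of the fixed set of $\Gamma$,'' but a point $x'\in\partial\Fix(\Gamma)$ need not satisfy $(Dg)_{x'}=I$; that hypothesis is only given at $x$. The correct replacement is a point $x'\in\partial\big(\operatorname{int}\Fix(\Gamma)\big)$: such a point is a limit of interior fixed points, where $Dg\equiv I$, so $(Dg)_{x'}=I$ by continuity of the derivative, and $x'\notin\operatorname{int}\Fix(\Gamma)$ forces nontrivial action in every neighborhood. (With $M$ connected and $x\in\operatorname{int}\Fix(\Gamma)$, this boundary is nonempty because $\operatorname{int}\Fix(\Gamma)$ is a nonempty proper open subset; this is the setting used in the paper's applications. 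For disconnected $M$ the statement as literally phrased requires more care, but that does not affect the paper.) Making this substitution closes the only gap; the remaining details you defer to Arzel\`a--Ascoli and mean-value estimates are routine and correct in spirit.
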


The strategy for the remainder of the proof is to argue that a lift $\sigma$ of $\mathcal P$ would force $\mathcal G$ to contain a non-locally-indicable subgroup. We will show that $B_n$ is a suitable group. 

\para{Step 2: Braid groups are strongly non-indicable}
\begin{proposition}\label{theorem:gorinlin}\mbox{ }
\begin{enumerate}[(i)]
\item For $n \ge 5$, the set
\[
S = \{\sigma_i \sigma_{i+1}^{-1} \mid 1 \le i \le n-2 \}
\]
generates $[B_n, B_n]$. Moreover, the elements of $S$ are all mutually conjugate within $[B_n, B_n]$.
\item \emph{(Gorin--Lin)} For $n \ge 5$, the commutator subgroup of the braid group $B_n$ is {\em perfect}, i.e.\ $[B_n,B_n]=[[B_n,B_n],[B_n,B_n]]$.
\end{enumerate} 
\noindent Consequently $B_n$ is strongly non-indicable for $n \ge 5$.
\end{proposition}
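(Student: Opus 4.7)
The plan is to take part (ii) as given, since it is the classical theorem of Gorin and Lin that we quote directly. The genuinely new work is therefore part (i). Once (i) and (ii) are both in hand, the final ``consequently'' assertion is immediate: $[B_n, B_n]$ is nontrivial, finitely generated (by $S$, from (i)), and perfect (from (ii)), and therefore directly witnesses the strong non-indicability of $B_n$ for $n \ge 5$.

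For the generation claim in (i), let $a_i = \sigma_i \sigma_{i+1}^{-1}$ and set $H = \langle a_1, \dots, a_{n-2} \rangle$. The inclusion $H \subseteq [B_n, B_n]$ is clear, since each $a_i$ is killed by the abelianization $\epsilon \colon B_n \to \Z$ sending $\sigma_i \mapsto 1$. For the reverse, observe that in the quotient $B_n / \langle\langle S \rangle\rangle^{B_n}$ all of the $\sigma_i$'s collapse to a common element, so this quotient is $\Z$ and therefore $\langle\langle S \rangle\rangle^{B_n} = [B_n, B_n]$. It therefore suffices to show $H$ is already normal in $B_n$, i.e.\ that $\sigma_j a_i \sigma_j^{-1} \in H$ for all $i, j$. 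Whenever $\sigma_j$ commutes with both $\sigma_i$ and $\sigma_{i+1}$ the claim is trivial, so only the finitely many ``neighboring'' cases $j \in \{i-1, i, i+1, i+2\}$ require work. These I would handle by explicit application of the braid identity $\sigma_k \sigma_{k+1} \sigma_k^{-1} = \sigma_{k+1}^{-1} \sigma_k \sigma_{k+1}$, rewriting each conjugate as a short word in the $a_k^{\pm 1}$. This is the step I expect to be the main obstacle: the verification is mechanical but requires a patient case-by-case calculation to make sure no stray $\sigma_k$'s are left over.

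For the conjugacy claim in (i), set $c_i = \sigma_i \sigma_{i+1} \sigma_{i+2}$ for $1 \le i \le n-3$. A direct computation using the braid relation together with $[\sigma_i, \sigma_{i+2}] = 1$ yields $c_i \sigma_i c_i^{-1} = \sigma_{i+1}$ and $c_i \sigma_{i+1} c_i^{-1} = \sigma_{i+2}$, so that $c_i a_i c_i^{-1} = a_{i+1}$. Since $\epsilon(c_i) = 3$, the element $c_i$ itself does not lie in $[B_n, B_n]$; the fix is to multiply $c_i$ by an element of abelianization $-3$ that centralizes either $a_i$ or $a_{i+1}$. When $i \ge 2$, the generator $\sigma_1$ commutes with both $\sigma_{i+1}$ and $\sigma_{i+2}$, so $\sigma_1^{-3}$ centralizes $a_{i+1}$ and $\sigma_1^{-3} c_i \in [B_n, B_n]$ still conjugates $a_i$ to $a_{i+1}$. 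When $i = 1$, the generator $\sigma_4$ (which is available precisely because $n \ge 5$) commutes with both $\sigma_1$ and $\sigma_2$, so $c_1 \sigma_4^{-3} \in [B_n, B_n]$ does the job. Iterating yields mutual conjugacy of all $a_i$'s in $[B_n, B_n]$; the hypothesis $n \ge 5$ enters here precisely in the need for a sufficiently distant Artin generator to serve as the correction.
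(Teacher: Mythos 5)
The overall skeleton matches the paper (normal generation $\Rightarrow$ generation, conjugacy via $c_i = \sigma_i\sigma_{i+1}\sigma_{i+2}$ corrected by a distant generator, then use (i) and (ii) to read off strong non-indicability), but the normality step contains a genuine gap, and you have in fact correctly identified it as the place where the work lives. You propose to verify $\sigma_j a_i \sigma_j^{-1} \in H := \langle a_1,\dots\rangle$ by a case analysis over $j \in \{i-1,i,i+1,i+2\}$, rewriting each conjugate ``as a short word in the $a_k^{\pm1}$'' — but this is not carried out, and it is less mechanical than you expect. For example, $\sigma_i a_i \sigma_i^{-1} = \sigma_i\sigma_{i+1}^{-1}\sigma_i^{-1}\sigma_{i+1} = a_i\cdot(\sigma_i^{-1}\sigma_{i+1})$, and $\sigma_i^{-1}\sigma_{i+1}$ is \emph{not} one of the telescoping products $\sigma_k\sigma_l^{-1}$ that visibly lie in $H$; expressing it in terms of the $a_k$ already requires the sort of auxiliary conjugation trick you haven't set up. The paper avoids the case analysis entirely with a single observation you should internalize: since $\sigma_{i+3}$ commutes with both $\sigma_i$ and $\sigma_{i+1}$ (indices taken cyclically, which is where $n \ge 5$ enters), one has
\[
\sigma_j(\sigma_i\sigma_{i+1}^{-1})\sigma_j^{-1} = (\sigma_j\sigma_{i+3}^{-1})(\sigma_i\sigma_{i+1}^{-1})(\sigma_j\sigma_{i+3}^{-1})^{-1},
\]
and $\sigma_j\sigma_{i+3}^{-1}$ is manifestly a telescoping product of the $a_k^{\pm1}$, so the right-hand side is a conjugate inside $\langle S\rangle$ by an element of $\langle S\rangle$. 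This replaces your four open cases by one line, uniformly in $i$ and $j$.

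Two smaller remarks. First, your conjugacy argument is essentially the paper's (same $c_i$, with a correction by a cube of a far-away generator to land in $[B_n,B_n]$); the only difference is your choice of $\sigma_1^{-3}$ versus $\sigma_4^{-3}$ depending on $i$, while the paper uses $\sigma_{i+3}^{-3}$ throughout — both are fine. Second, you treat (ii) as a black box by quoting Gorin--Lin, whereas the paper gives a short self-contained proof: by the conjugacy established in (i), it suffices to write a single generator $\sigma_i\sigma_{i+1}^{-1}$ as a commutator of elements of $[B_n,B_n]$, and the paper exhibits $\sigma_i\sigma_{i+1}^{-1} = [\sigma_{i+1}\sigma_i\sigma_j^{-2},\,\sigma_{i+1}\sigma_j^{-1}]$ for a generator $\sigma_j$ commuting with both $\sigma_i$ and $\sigma_{i+1}$. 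Quoting Gorin--Lin is legitimate, but you should at least note that (i) does nontrivial work in making (ii) easy to see directly.
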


\begin{proof}
We begin with the proof of $(i)$. For $1 \le i \le n$, let $\sigma_i\in B_n$ denote the braid that passes the $i^{th}$ strand over the $(i+1)^{st}$, with subscripts interpreted mod $n$. The elements $\sigma_1, \dots, \sigma_n$ are all mutually conjugate, and the abelianization map $A: B_n \to \Z$ is given by the total exponent sum of all the generators. Consequently, the set
\[
S = \{ \sigma_i \sigma_{i + 1}^{-1} \mid 1 \le i \le n-1\}
\]
normally generates $[B_n, B_n]$ inside $B_n$. 

To prove the claim, it therefore suffices to show that the subgroup $\pair{S}$ of $B_n$ generated by $S$ is normal, which in turn reduces to showing that $\sigma_j (\sigma_i \sigma_{i+1}^{-1}) \sigma_j^{-1} \in \pair{S}$ for any $1 \le j \le n$. As $n \ge 5$, the generator $\sigma_{i+3}$ commutes with $\sigma_i$ and $\sigma_{i+1}$, from which
\[
\sigma_j (\sigma_i \sigma_{i+1}^{-1}) \sigma_j^{-1}  = (\sigma_j \sigma_{i+3}^{-1})(\sigma_i \sigma_{i+1}^{-1}) (\sigma_j \sigma_{i+3}^{-1})^{-1}.
\]
The right-hand side exhibits $\sigma_j (\sigma_i \sigma_{i+1}^{-1}) \sigma_j^{-1} $ as a product of elements of $\pair{S}$, and the result follows.

The next step is to show that the elements of $S$ are all conjugate within $[B_n, B_n]$. Via the braid relations,
\begin{equation}\label{equation:relation}
(\sigma_i \sigma_{i+1} \sigma_{i+2}) \sigma_i \sigma_{i+1}^{-1} (\sigma_i \sigma_{i+1} \sigma_{i+2}) ^{-1} = \sigma_{i+1} \sigma_{i+2}^{-1}.
\end{equation}
As above, the element
\[
\sigma_i \sigma_{i+1} \sigma_{i+2}\sigma_{i+3}^{-3} \in [B_n, B_n]
\]
also conjugates $\sigma_i \sigma_{i+1}^{-1}$ to $\sigma_{i+1} \sigma_{i+2}^{-1}$.

From what has been established above, to establish $(ii)$, it is sufficient to express each $\sigma_i \sigma_{i+1}^{-1}$ as a commutator in $[B_n, B_n]$. For $n \ge 5$, there is some $j$ for which $\sigma_j$ commutes with both $\sigma_i$ and $\sigma_{i+1}$, and therefore the expression
\[
\sigma_i \sigma_{i+1}^{-1} = [\sigma_{i+1} \sigma_i \sigma_j^{-2},\sigma_{i+1} \sigma_{j}^{-1}]
\]
(which holds as a result of the braid relations) proves the claim.
\end{proof}

\begin{remark}
In fact, $[B_n,B_n]$ is finitely generated for all $n \ge 2$. We content ourselves with the given proof because it is better suited to the applications in the present paper.
\end{remark}

\para{Step 3: Produce $B_m \le B_n(S)$}
The following is implied by a theorem of Paris-Rolfsen \cite[Theorem 4.1(iii)]{parisrolfsen}.
\begin{theorem}[Paris-Rolfsen]\label{theorem:parisrolfsen}
For $S \ne S^2$, the inclusion of subsurfaces $(\disk, X_{n}) \into (S, X_{n})$ induces an injective map $B_{n} \into B_{n}(S)$. In the case $S = S^2$, an inclusion $(\disk, X_n) \into (S^2, X_{n+1})$ induces a homomorphism $B_n \to B_{n+1}(S^2)$. The kernel of this homomorphism is contained in the cyclic group $\pair{\Delta}$ generated by the Dehn twist of a boundary-parallel curve using the identification $B_n \cong \Mod(\disk, n)$, and is contained in the center of $B_n$. 
\end{theorem}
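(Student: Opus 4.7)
The plan is to induct on $n$ using the Fadell--Neuwirth fibrations
\[
\PConf_1(Y \setminus X_{n-1}) \to \PConf_n(Y) \to \PConf_{n-1}(Y)
\]
(obtained by forgetting the last coordinate), for both $Y = \disk^2$ and $Y = S$, together with naturality of these fibrations under the inclusion $(\disk^2, X_n) \hookrightarrow (S, X_n)$. It will be cleanest to establish the statement first for the pure braid groups $P_n$ and then promote to the full braid groups via the extensions $1 \to P_n \to B_n \to S_n \to 1$ and the five-lemma, using that the induced map $S_n \to S_n$ on symmetric quotients is the identity.

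Suppose first that $S \ne S^2$. Then $S$ and $\disk^2$ are aspherical, so by the classical theorem of Fadell--Neuwirth the bases $\PConf_{n-1}(Y)$ are also aspherical, and the long exact sequence in homotopy collapses to short exact sequences
\[
1 \to \pi_1(Y \setminus X_{n-1}) \to P_n(Y) \to P_{n-1}(Y) \to 1.
\]
Naturality produces a ladder of these sequences whose right-hand vertical map $P_{n-1} \to P_{n-1}(S)$ is injective by induction (the base case being trivial). By the five-lemma it then suffices to show that $\pi_1(\disk^2 \setminus X_{n-1}) \to \pi_1(S \setminus X_{n-1})$ is injective. This is an instance of the $\pi_1$-injectivity of essential subsurface inclusions: viewed as a subsurface of $S \setminus X_{n-1}$, the complement of $\disk^2 \setminus X_{n-1}$ is $S \setminus \disk^2$, which fails to be a disk exactly when $S \ne S^2$ (the case $S = \disk^2$ being vacuous).

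For $S = S^2$, the inclusion $(\disk^2, X_n) \hookrightarrow (S^2, X_{n+1})$ places the extra marked point $\infty$ outside $\disk^2$, so we study $B_n \cong B_n(S^2 \setminus \{\infty\}) \to B_{n+1}(S^2)$ via the Fadell--Neuwirth fibration $\PConf_{n+1}(S^2) \to S^2$ remembering only $\infty$, whose fiber is $\PConf_n(\R^2)$. Using $\pi_1(S^2) = 1$ and asphericity of $\PConf_n(\R^2)$, the tail of the long exact sequence reduces to
\[
\pi_2(S^2) \to P_n \to P_{n+1}(S^2) \to 1,
\]
so the kernel of $P_n \to P_{n+1}(S^2)$ is the image of $\pi_2(S^2) \cong \Z$ and in particular cyclic. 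The main obstacle is the final geometric identification: a careful inspection of the connecting homomorphism should show that the generator of $\pi_2(S^2)$ corresponds to dragging $\disk^2$ (together with $X_n$) through a full turn around $\infty$, so the generator of the kernel is the boundary Dehn twist $\Delta \in \Mod(\disk^2, X_n) \cong B_n$, which lies in the center. Passing to full braid groups via the extensions by $S_n$ and $S_{n+1}$ preserves this cyclic central kernel because $S_n \hookrightarrow S_{n+1}$ is injective.
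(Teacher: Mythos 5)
The paper does not prove this statement; it cites Theorem 4.1(iii) of Paris--Rolfsen, so there is no internal argument to compare against. Your Fadell--Neuwirth strategy is a reasonable independent proof, and for $S \ne S^2$ it is essentially complete: naturality gives the ladder of short exact sequences, and the key remaining input, $\pi_1$-injectivity of $\disk \setminus X_{n-1} \hookrightarrow S \setminus X_{n-1}$, is correctly identified. To make the incompressibility claim rigorous, cut $S \setminus X_{n-1}$ along $\partial\disk$ and apply van Kampen: $\pi_1(S \setminus X_{n-1})$ is the amalgam of $\pi_1(\disk\setminus X_{n-1})$ and $\pi_1(S\setminus\disk)$ over $\Z$, and the edge maps are injective because $x_1\cdots x_{n-1}$ has infinite order in the free group $\pi_1(\disk\setminus X_{n-1})$ and because $S\setminus\disk$ is not a disk when $S\neq S^2$; factor inclusions into amalgamated products are injective.

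For $S = S^2$ there is a genuine gap. The exact sequence correctly shows that $\ker(P_n \to P_{n+1}(S^2))$ is a cyclic group (a quotient of $\pi_2(S^2)\cong\Z$), but \emph{cyclic} is strictly weaker than the theorem's assertion that the kernel lies in $\langle\Delta\rangle$, and you defer precisely this identification to ``a careful inspection of the connecting homomorphism.'' That inspection is the entire content of the sphere case and cannot be omitted. One way to close it: the image of $\partial$ is an abelian normal subgroup of $P_n$, so the conjugation action of $P_n$ on it factors through $P_{n+1}(S^2)$; verifying this action is trivial shows the image is central, whence $\ker \subseteq Z(P_n)=\langle\Delta\rangle$ for $n\ge 3$ (with small $n$ handled separately). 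Alternatively, compute the connecting homomorphism explicitly and show its image is generated by a power of $\Delta$. The passage from $P_n$ to $B_n$ via the symmetric-group extensions is fine, but some argument of the above kind must be supplied before the sphere case is actually proved.
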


\begin{remark}\label{remark:stabilizers}
By construction, the subgroup $B_{n-1} \le B_{n}$ stabilizes $X_{n} \setminus X_{n-1}$. More precisely, if $\tau\in B_{n-1}\le B_{n}(S)$ and $\phi\in\Diff(S, \partial S, X_{n})$ is any representative of $\ca P(\tau)\in\pi_0\big(\Diff(S, \partial S, X_{n})\big)$, then $\phi$ fixes the point $X_{n}\setminus X_{n-1}$. Similarly, the image of $B_{n}$ inside $B_{n+1}(S^2)$ stabilizes $X_n \setminus X_{n-1}$.
\end{remark}

\para{Step 4: Reduction to Thurston stability}
In order to apply the Thurston stability theorem, we must first study the derivative mapping at the global fixed point.
\begin{lemma}\label{lemma:nonabelian}
For $n \ge 5$, every homomorphism $f: B_n \to GL^+_2(\R)$ has abelian image.
\end{lemma}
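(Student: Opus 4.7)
The plan is to argue directly from the braid relations, making essential use of the fact that when $n \ge 5$, many pairs of standard generators commute. Set $P_i \coloneq f(\sigma_i)$. Since the $\sigma_i$ are mutually conjugate in $B_n$, the matrices $P_1, \ldots, P_{n-1}$ are mutually conjugate in $\GL_2^+(\R)$; hence either every $P_i$ is a scalar matrix (in which case $f(B_n)$ lies in the center of $\GL_2^+(\R)$ and is automatically abelian) or no $P_i$ is scalar. Assume the latter.

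The key linear-algebra observation is that for any non-scalar $M \in \GL_2(\R)$, the centralizer $C(M)$ is abelian: $M$ satisfies a degree-$2$ minimal polynomial, so $C(M)$ coincides with the group of units in the $2$-dimensional commutative subalgebra $\R[M]$.

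For $n \ge 5$, $\sigma_1$ commutes with both $\sigma_3$ and $\sigma_4$, so $P_3, P_4 \in C(P_1)$, forcing $P_3 P_4 = P_4 P_3$; substituting into the braid relation $P_3 P_4 P_3 = P_4 P_3 P_4$ immediately yields $P_3 = P_4$. The symmetric argument applied to $C(P_4)$, which contains both $P_1$ and $P_2$, gives $P_1 = P_2$. Finally, since $P_1 = P_2$ and $P_1$ commutes with $P_3$ (as $|1-3|\ge 2$), the matrix $P_2$ commutes with $P_3$, so the braid relation $P_2 P_3 P_2 = P_3 P_2 P_3$ forces $P_2 = P_3$. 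Propagating these identifications across the remaining generators yields $P_1 = P_2 = \cdots = P_{n-1}$, so $f(B_n) = \langle P_1 \rangle$ is cyclic, hence abelian.

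The main subtlety --- and the reason the hypothesis $n \ge 5$ is essential --- is the need for some index $i$ whose generator commutes with \emph{two} adjacent generators (here $\sigma_1$ commutes with both $\sigma_3$ and $\sigma_4$); no such index exists for $n = 4$. I note also that it is \emph{not} enough to observe (via Proposition \ref{theorem:gorinlin}(ii)) that $f([B_n, B_n]) \le \SL_2(\R)$ is perfect: $\PSL_2(\R)$ does contain nontrivial finitely generated perfect subgroups, such as the $(2,3,7)$-triangle group, so the finer braid-theoretic argument above is genuinely needed.
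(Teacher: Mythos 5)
Your proof is correct, but it takes a genuinely different route from the paper's. The paper reduces Lemma~\ref{lemma:nonabelian} to the abstract criterion in Lemma~\ref{lemma:general}: it passes to the projection $G \to \PSL_2(\R)$, observes that centralizers of nontrivial elements there are one-parameter (hence abelian) subgroups, and uses \emph{only} the mutual-conjugacy and far-commutation properties of a cyclic generating set $\sigma_1,\dots,\sigma_n$ (with $\sigma_n$ the ``wrap-around'' generator) to force the whole image into a single one-parameter subgroup. Your argument instead stays in $\GL_2^+(\R)$, uses the sharper linear-algebra fact that $C(M)=\R[M]^\times$ is abelian for any non-scalar $2\times2$ matrix, and then invokes the braid relation $\sigma_i\sigma_{i+1}\sigma_i=\sigma_{i+1}\sigma_i\sigma_{i+1}$ directly to collapse $P_3=P_4$, then $P_1=P_2$, and so on. What your approach buys is a stronger conclusion ($f(B_n)$ is actually \emph{cyclic}, not merely abelian) and no need to invent an extra cyclic generator; what you lose is generality --- the paper's Lemma~\ref{lemma:general} uses no relation beyond commutativity, and this is exactly what is needed in Section~\ref{section:extension}, where the same criterion is applied to the generating set $\{\sigma_i\sigma_{i+1}^{-1}\}$ of $[B_7,B_7]$ (with $k=3$), a set that does \emph{not} satisfy braid relations. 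Your closing aside is a correct and worthwhile point: the $(2,3,7)$-triangle group is a nontrivial finitely generated perfect subgroup of $\PSL_2(\R)$, so one genuinely cannot dispatch the lemma by invoking perfection of $[B_n,B_n]$ alone. One small presentational gap: the final "propagating these identifications" sentence could use one more line --- e.g.\ once $P_{i-1}=P_i$ is known, $P_{i-1}$ commutes with $P_{i+1}$, so $P_i$ commutes with $P_{i+1}$, and the braid relation then gives $P_i=P_{i+1}$; this induction is what actually carries the identification across all indices.
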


This is a consequence of the following more general criterion (which we will employ again in Section \ref{section:extension}).

\begin{lemma}\label{lemma:general}
Let $G$ be a group generated by elements $\tau_1, \dots \tau_n$ that satisfy the following properties:
\begin{enumerate}
\item The elements $\tau_i$ are all mutually conjugate.
\item There exists $k \ge 1$ such that $[\tau_i, \tau_j] = 1$ for $\abs{j-i} \ge k$ (here we mean distance in $\R / n \Z$). 
\end{enumerate}
Then for $n \ge 2k+1$, every homomorphism $f: G \to \GL_2^+(\R)$ has abelian image.
\end{lemma}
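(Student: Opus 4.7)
The plan is to analyze the matrices $A_i := f(\tau_i)$, which generate the image of $f$ in $\GL_2^+(\R)$. By hypothesis (1) the $A_i$ form a single conjugacy class in $\GL_2^+(\R)$, so in particular they are either all scalar or all non-scalar. The scalar case is trivial: all $A_i$ then coincide and the image is cyclic. I would therefore focus on the case that each $A_i$ is non-scalar, in which the centralizer $Z_{\GL_2(\R)}(A_i)$ is an abelian subgroup—a standard fact about $2\times 2$ matrices, since the centralizer of a non-scalar $A$ is $\{cI + dA : c, d \in \R\} \cap \GL_2(\R)$.

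From there the argument becomes combinatorial. I would introduce the commutation graph $\Gamma$ on vertex set $\{1,\dots,n\}$, with $i \sim j$ iff $A_i$ and $A_j$ commute, and observe that hypothesis (2) supplies an edge for every pair of cyclic distance $\ge k$. The key structural observation is that in the non-scalar case, each closed neighborhood $N_\Gamma[i]$ is a clique: if $A_j, A_\ell$ both commute with $A_i$, they both lie in the abelian group $Z(A_i)$. I would then show $\Gamma$ is the complete graph, which forces the image of $f$ to be abelian. This follows from two elementary facts. First, in any graph all of whose closed neighborhoods are cliques, adjacent vertices $i \sim j$ must satisfy $N[i] = N[j]$ (since $j$ is adjacent to everything in $N[i]\setminus\{j\}$, and symmetrically); hence each connected component is a clique. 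Second, $\Gamma$ is connected as soon as $n \ge 2k+1$: each vertex $i$ has hypothesis-given edges to both $i + k$ and $i + k + 1$ (the cyclic distances are $\min(k, n-k) = k$ and $\min(k+1, n-k-1) \ge k$, using $n \ge 2k+1$), and $\gcd(k,k+1) = 1$ means these shifts generate $\Z/n\Z$.

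The main subtlety is that in the tight case $n = 2k + 1$ each vertex has only two hypothesis-given neighbors, so a naive ``find a common neighbor of $A_i$ and $A_j$ and invoke abelianness of its centralizer'' strategy already fails for pairs $(i,j)$ at small cyclic distance. The clique-of-neighborhoods property together with the coprimality $\gcd(k,k+1) = 1$ is what rescues the argument, and this is exactly where the bound $n \ge 2k+1$ enters. I view this combinatorial passage as the heart of the proof; the linear-algebraic input—that non-scalar elements of $\GL_2(\R)$ have abelian centralizer—is classical.
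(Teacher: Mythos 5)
Your proof is correct, and it takes a genuinely different combinatorial route from the paper's. The paper first projects to $\PSL_2(\R)$ (using that preimages of one-parameter subgroups of $\PSL_2(\R)$ in $\GL_2^+(\R)$ are abelian), then argues by contradiction: it reduces a non-commuting pair $(\bar\tau_1, \bar\tau_j)$ with $j$ minimal to the adjacent case $j = 2$ by a centralizer argument, and then observes that $\bar\tau_{k+2}$ is a common neighbor of $\bar\tau_1$ and $\bar\tau_2$, forcing them into the abelian centralizer $C_{\PSL_2(\R)}(\bar\tau_{k+2})$. You instead work directly in $\GL_2(\R)$ using that $Z_{\GL_2(\R)}(A) = \R[A] \cap \GL_2(\R)$ is abelian for non-scalar $A$, formalize commutation as a graph $\Gamma$, and prove completeness of $\Gamma$ via the clean structural lemma that a graph with all closed neighborhoods cliques is a disjoint union of cliques, combined with connectivity of the hypothesis-given subgraph (shifts by $k$ and $k+1$, coprime). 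The two arguments use the same linear-algebraic engine but package the combinatorics very differently: the paper's is a local, two-step reduction argument; yours is a global graph-theoretic one, and in particular it makes transparent exactly why the threshold $n \ge 2k+1$ appears (at $n = 2k$ the hypothesis-given edges form a perfect matching and connectivity fails). Your version also avoids the detour through $\PSL_2(\R)$ and its one-parameter subgroups, which is a mild simplification.
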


\begin{proof}
It suffices to show that the projection $\bar f: G \to \GL_2^+(\R) \to \PSL_2(\R)$ has image contained in a one-parameter subgroup. This is because the preimage in $\GL_2^+(\R)$ of any one-parameter subgroup in $\PSL_2(\R)$ is abelian. For convenience, we will write $\bar \tau_i$ in place of $\bar f(\tau_i)$. By condition (1) above, if $\bar f$ is a nontrivial homomorphism, then each $\bar \tau_i \ne I$. 

If the image of $\bar f$ is not contained in some one-parameter subgroup, then in particular, there must be some pair of elements $\bar \tau_i$ and $\bar \tau_j$ that do not commute. By relabeling if necessary, we may assume $i = 1$ and $2\le j\le k$. Furthermore, we may assume $j$ is the smallest integer between 2 and $k$ for which $\bar \tau_1$ and $\bar \tau_j$ do not commute. 

We wish to show $j = 2$. Suppose $j > 2$. If $\bar \tau_{j-1}$ and $\bar \tau_j$ do not commute, then by relabeling again, we may assume that $\bar \tau_1$ and $\bar \tau_2$ do not commute. If, on the other hand, $\bar \tau_{j-1}$ commutes with $\bar \tau_j$, then both $\bar \tau_1$ and $\bar \tau_j$ are contained in the centralizer $C_{\PSL_2(\R)}(\bar \tau_{j-1})$. As the latter is a one-parameter subgroup, necessarily $\bar \tau_1$ and $\bar \tau_j$ commute, contrary to assumption. We conclude that up to a cyclic relabeling of the generators $\tau_i$, we must have $\bar \tau_1$ and $\bar \tau_2$ noncommuting elements of $\PSL_2(\R)$. 

By condition (2) above and the assumption $n \ge 2k+1$, the element $\bar \tau_{k+2}$ commutes with both $\bar \tau_1$ and $\bar \tau_2$. Therefore, $\bar \tau_1$ and $\bar \tau_2$ are contained in the abelian subgroup $\Cent_{\PSL_2(\R)}(\bar \tau_{k+2})$, contrary to assumption.
\end{proof}

\begin{proof}(of Lemma \ref{lemma:nonabelian})
We show that $B_n$ satisfies the hypotheses of Lemma \ref{lemma:general} for $k = 2$. Indeed, for $1 \le i \le n$, let $\tau_i = \sigma_i$, the $i^{th}$ standard generator of $B_n$. We interpret $\sigma_n$ to be the element crossing the $n^{th}$ strand over the first, under a cyclic ordering of the strands. As the elements $\sigma_i$ are mutually conjugate and $[\sigma_i, \sigma_j] = 1$ for $\abs{j-i} \ge 2$, the result follows. 
\end{proof} 

\begin{remark}
The assumption $n \ge 5$ in Lemma \ref{lemma:nonabelian} cannot be relaxed: it is well-known that there is a homomorphism $B_3 \to \SL_2 \Z$ with nonabelian image. The case $n = 4$ follows from the existence of an exceptional surjective homomorphism $B_4 \to B_3$. 
\end{remark}

To complete the proof of Theorem \ref{theorem:surfaces}, we begin with the case $\partial S = \varnothing$. Suppose, for a contradiction, that a lift $\sigma: B_n(S) \to \Diff(S, \pa S, X_n)$ (for $n \ge 6$) is given. By Theorem \ref{theorem:parisrolfsen}, there is a nontrivial homomorphism $B_{n-1} \to \Mod(S,n)$; it follows from Remark \ref{remark:quotients} that $\Mod(S, n)$ is strongly non-indicable. By Remark \ref{remark:stabilizers}, the lift $\sigma(B_{n-1})$ fixes some point $x\in X_n \setminus X_{n-1}$. Let $D: B_{n-1} \to \GL_2^+(\R)$ denote the derivative mapping at $x$. Via Lemma \ref{lemma:nonabelian}, $[B_{n-1}, B_{n-1}] \le \ker D$. Thurston stability (Theorem \ref{theorem:thurston_stability}) then asserts that $[B_{n-1}, B_{n-1}]$ must be locally indicable, but this contradicts Theorem \ref{theorem:gorinlin}. 

To obtain the improvement $n \ge 5$ in the case $\partial S$ is non-empty, we simply apply the preceding arguments to any point $x \in \partial S$. Here, we do not need to pass to $B_{n-1}$ in order to produce a fixed point a la Remark \ref{remark:stabilizers}, and so the argument applies for all $n \ge 5$. 
\qed


\section{The Morita non-lifting theorem}\label{section:morita}

The purpose of this section is to show how the methods of Theorem \ref{theorem:surfaces} can be extended to give a new proof of Morita's non-lifting theorem. We are grateful to the referee for observing that our methods should be applicable to this situation, and for suggesting Steps 1 and 2 below.


\begin{proof}[Proof of Theorem \ref{theorem:morita}] Suppose that there is a realization $\sigma: \Mod(\Sigma_g) \to \Diff(\Sigma_g)$. We will arrive at a contradiction. The argument is divided into four steps.

\noindent {\bf Step 1: A large subgroup with a finite orbit.} In this step, we indicate a particular constraint on the dynamics of any realization of the mapping class group by diffeomorphisms. Let $\iota$ denote the hyperelliptic involution (as depicted in Figure \ref{figure:hyperelliptic}). Let $C(\iota)$ denote the centralizer of $\iota$ inside $\Mod(\Sigma_g)$. 

\begin{lemma}\label{lemma:fixset}
For any realization $\sigma$, the fixed set $\Fix(\sigma(\iota))$ consists of exactly $2g+2$ points.
\end{lemma}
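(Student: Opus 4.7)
The plan is to combine the Lefschetz fixed point formula with the Thurston stability theorem. Write $f := \sigma(\iota)$. Since $\sigma$ is a section of $\Diff(\Sigma_g) \to \Mod(\Sigma_g)$ and $\iota \in \Mod(\Sigma_g)$ is nontrivial of order $2$, the map $f$ is a nontrivial orientation-preserving $C^1$ involution of $\Sigma_g$ representing the mapping class $\iota$. In particular, the induced action $f_*$ on $H_*(\Sigma_g;\R)$ coincides with $\iota_*$.

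First I would compute the Lefschetz number of $f$. The hyperelliptic involution acts as $-I$ on $H_1(\Sigma_g;\R)$ (a classical fact, visible from the fact that $\Sigma_g/\iota$ is a sphere with $2g+2$ branch points), as the identity on $H_0$, and as $+1$ on $H_2$ (since $f$ is orientation-preserving). Therefore
\[
L(f) \;=\; 1 - (-2g) + 1 \;=\; 2g+2.
\]

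Next I would analyze the local behavior of $f$ at each fixed point. At any $p \in \Fix(f)$, the derivative $(Df)_p \in \GL^+_2(\R)$ satisfies $(Df)_p^2 = I$, so it is either $I$ or $-I$. The case $(Df)_p = I$ is ruled out by Thurston's stability theorem (Theorem \ref{theorem:thurston_stability}): this would place $\langle f \rangle \cong \Z/2\Z$ inside the locally indicable group $\ca{G}$, but a nontrivial finite cyclic group admits no surjection to $\Z$. Hence $(Df)_p = -I$ at every $p \in \Fix(f)$. Since $-I$ fixes no line, every fixed point is isolated (no $1$-dimensional fixed components can occur), and the local Lefschetz index at $p$ equals $+1$ because $\det\bigl(I - (Df)_p\bigr) = \det(2I) > 0$. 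Combining these ingredients, the Lefschetz fixed-point formula yields $|\Fix(f)| = L(f) = 2g+2$, as desired. The only real delicacy is ruling out $(Df)_p = I$, which is the natural point of engagement with the paper's Thurston-stability theme; the rest is standard fixed-point calculus together with the classical homology action of the hyperelliptic involution.
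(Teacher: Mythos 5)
Your proof is correct. The main skeleton --- Lefschetz number $L(f) = 2g+2$ via the classical homology action of the hyperelliptic involution, plus a local index computation at fixed points --- is the same one that underlies the argument the paper cites in \cite[Section 7.1.2]{fm}. Where you differ from the standard reference is in the step showing that every fixed point is isolated with derivative $-I$: the usual route (and the one in Farb--Margalit) averages a Riemannian metric over $\langle f\rangle$ to make $f$ an isometry, or equivalently invokes Bochner--Montgomery linearization, and then reads off that the linearization at each fixed point is rotation by $\pi$. You instead observe directly that $(Df)_p\in\GL_2^+(\R)$ squares to the identity and hence equals $\pm I$, and you rule out $+I$ by feeding the order-two group $\langle f\rangle$ into the Thurston stability theorem. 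This is a nice substitution: it is thematically consistent with the rest of the paper, and it sidesteps the mild regularity fuss in the averaging argument (the averaged metric of a $C^1$ involution is only $C^0$), making the argument cleanly adapted to the $C^1$ hypothesis in force throughout. The rest of your index calculus is correct: $(Df)_p=-I$ gives $\det\bigl(I-(Df)_p\bigr)=4>0$, so each isolated fixed point contributes $+1$, and the count follows.
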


\begin{proof}
This is a standard argument that follows from the Lefschetz fixed-point theorem. See \cite[Section 7.1.2]{fm} for details.
\end{proof}

A standard principle in the theory of group actions gives the following corollary.

\begin{corollary}\label{corollary:finiteorbit}
The subgroup $\sigma(C(\iota)) \le \Diff(\Sigma_g)$ preserves the finite set $\Fix(\sigma(\iota))$; associated to this is a permutation representation $\rho: C(\sigma(\iota)) \to S_{2g+2}$, the symmetric group on $2g+2$ letters.
\end{corollary}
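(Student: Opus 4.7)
The plan is to invoke the elementary but fundamental principle that if a diffeomorphism $h$ commutes with another diffeomorphism $f$, then $h$ preserves $\Fix(f)$ setwise: for any $x \in \Fix(f)$ we have $f(h(x)) = h(f(x)) = h(x)$, so $h(x) \in \Fix(f)$.

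Applying this with $f = \sigma(\iota)$, I first observe that for any $\phi \in C(\iota) \le \Mod(\Sigma_g)$ the image $\sigma(\phi) \in \Diff(\Sigma_g)$ commutes with $\sigma(\iota)$, since $\sigma$ is a homomorphism. By the principle above, $\sigma(\phi)$ preserves $\Fix(\sigma(\iota))$ as a set. By Lemma \ref{lemma:fixset}, this fixed set has exactly $2g+2$ points, so choosing a labeling of these points produces a permutation and hence a map $\rho: \sigma(C(\iota)) \to S_{2g+2}$. That $\rho$ is a homomorphism is immediate from the fact that the assignment of a permutation to a setwise-preserving homeomorphism respects composition.

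The only mild point to address is that the statement in the corollary writes the domain of $\rho$ as $C(\sigma(\iota))$; under $\sigma$, every element of $C(\iota)$ lands in the centralizer of $\sigma(\iota)$ in $\Diff(\Sigma_g)$, so either reading (image of $\sigma|_{C(\iota)}$, or the full diffeomorphism-group centralizer of $\sigma(\iota)$) is compatible with the argument, and the construction of $\rho$ is the same.

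There is essentially no obstacle here: Lemma \ref{lemma:fixset} has already done the substantive work by pinning down the cardinality of the fixed set, and the remainder is a one-line consequence of commutativity. The proof in the text can therefore be kept to a sentence or two.
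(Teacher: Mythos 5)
Your proof is correct and takes the same approach the paper intends: the paper cites this only as ``a standard principle in the theory of group actions,'' and your computation $f(h(x)) = h(f(x)) = h(x)$ is exactly that principle spelled out. Your remark on the mismatch between $\sigma(C(\iota))$ and $C(\sigma(\iota))$ is also apt; the paper's later Step 3 confirms that $\rho$ is naturally defined on the full centralizer of $\sigma(\iota)$ inside $\Diff(\Sigma_g)$ (indeed it is even extended to the centralizer in $\Homeo(\Sigma_g)$), so both readings are compatible.
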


\noindent{\bf Step 2: A non-indicable subgroup of $C(\iota)$.} 
\begin{lemma}\label{lemma:hyperellipticbraids}
For all $g \ge 2$, $C(\iota)$ contains a strongly non-indicable subgroup $B$ isomorphic to a quotient of $B_{2g+2}$.
\end{lemma}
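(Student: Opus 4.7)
The plan is to invoke classical Birman--Hilden theory for the branched double cover $p\colon \Sigma_g \to \Sigma_g/\langle\iota\rangle = S^2$, which identifies $C(\iota)$ as a central $\Z/2$-extension of the mapping class group of the sphere with $2g+2$ marked branch points:
\[
1 \to \langle \iota \rangle \to C(\iota) \to \Mod(S^2, 2g+2) \to 1.
\]

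First I would exhibit an explicit homomorphism $\Psi\colon B_{2g+2} \to C(\iota)$. Take the standard chain of $2g+1$ simple closed curves $c_1, \ldots, c_{2g+1}$ on $\Sigma_g$ used in presenting the hyperelliptic mapping class group: each $c_i$ is preserved setwise by $\iota$, adjacent curves meet once, and non-adjacent curves are disjoint. The Dehn twists $T_{c_i}$ therefore lie in $C(\iota)$ and satisfy the standard braid relations, so the assignment $\sigma_i \mapsto T_{c_i}$ defines $\Psi$. Set $B = \Psi(B_{2g+2}) \le C(\iota)$; by construction $B$ is a quotient of $B_{2g+2}$.

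To see that $B$ is strongly non-indicable, I would apply Remark \ref{remark:quotients} to the subgroup $K = [B_{2g+2}, B_{2g+2}] \le B_{2g+2}$, which is finitely generated, nontrivial, and perfect by Proposition \ref{theorem:gorinlin} (applicable because $2g+2 \ge 6$ for $g \ge 2$). It suffices to check that $K \not\subseteq \ker \Psi$. For this I would post-compose $\Psi$ with the projection $C(\iota) \to \Mod(S^2, 2g+2)$: each $T_{c_i}$ descends to the half-twist about the arc $p(c_i)$, these half-twists generate $\Mod(S^2, 2g+2)$, and $\Mod(S^2, 2g+2)$ is nonabelian for $g \ge 2$ since it surjects onto the symmetric group $S_{2g+2}$ via the permutation action on the marked points. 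Hence $K$ has nontrivial image in $\Mod(S^2, 2g+2)$, and therefore $\Psi(K) \le B$ is nontrivial, finitely generated, and perfect, as required.

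The only substantive input is the existence and well-definedness of $\Psi$, which is the content of the standard Birman--Hilden presentation of $C(\iota)$ by the $T_{c_i}$ (see \cite[Section 9.4]{fm}); once this is granted, the rest of the argument is formal, and there is no real obstacle.
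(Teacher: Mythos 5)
Your proof is correct and takes essentially the same approach as the paper: both exhibit the subgroup $B$ generated by the Dehn twists $T_{c_1},\dots,T_{c_{2g+1}}$ about the $\iota$-invariant chain of curves, note that the braid relations hold, and invoke Remark \ref{remark:quotients} together with Proposition \ref{theorem:gorinlin}. The one place you diverge is in checking the hypothesis of Remark \ref{remark:quotients}, namely that $[B_{2g+2},B_{2g+2}]$ does not lie in the kernel of $B_{2g+2}\to B$: you route this through the Birman--Hilden projection $C(\iota)\to\Mod(S^2,2g+2)$ and the further surjection onto $S_{2g+2}$, whereas the paper simply flags $B$ as a ``(nontrivial)'' quotient and leaves the verification implicit. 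Your detour is valid but heavier than necessary; it is enough to observe that $T_{c_1}$ and $T_{c_2}$ fail to commute (e.g.\ $T_{c_1}T_{c_2}T_{c_1}^{-1}=T_{T_{c_1}(c_2)}\neq T_{c_2}$), so $B$ is nonabelian and hence $[B_{2g+2},B_{2g+2}]\not\subseteq\ker(B_{2g+2}\to B)$, which is all Remark \ref{remark:quotients} requires.
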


\begin{proof}
Consider the family of $2g+1$ simple closed curves $c_1, \dots, c_{2g+1}$ indicated in Figure \ref{figure:hyperelliptic}. As the geometric intersection $i(c_i, c_{i+1}) = 1$ for all $i$, and $i(c_i, c_j) = 0$ for $\abs{i-j} \ge 2$, the subgroup $B \le \Mod(\Sigma_g)$ generated by the Dehn twists $T_{c_i}$ satisfy the braid relations: $B$ is a (nontrivial) quotient of $B_{2g+2}$. It follows from Theorem \ref{theorem:gorinlin} and Remark \ref{remark:quotients} that $B$ is strongly non-indicable. 

As each $c_i$ is invariant under the action of $\iota$, it follows that each $T_{c_i} \in C(\iota)$; consequently $B \le C(\iota)$ as claimed.
\end{proof}

\begin{remark}\label{remark:Bsubgroup}
Let $B'$ denote the image of $B_{2g+1}$ in $B$. By the above arguments, $B'$ is also strongly non-indicable for $g \ge 2$. 
\end{remark}

\noindent{\bf Step 3: The action of $B_{2g+2}$ on $\Fix(\sigma(\iota))$.} In this step, we explicitly identify the action of $\sigma(B)$ on $\Fix(\sigma(\iota))$.

\begin{lemma}\label{lemma:action}
There is a commutative diagram
\[
\xymatrix{
B_{2g+2} \ar[r]^\mu \ar[d] & S_{2g+2}\\
B \ar[ur]_{\rho \circ \sigma}
}
\]
where the map $\mu: B_{2g+2} \to S_{2g+2}$ is the canonical permutation homomorphism. Letting $B' \le B$ be the subgroup defined in Remark \ref{remark:Bsubgroup}, it follows that the action of $B'$ on $\Fix(\sigma(\iota))$ has a global fixed point. 
\end{lemma}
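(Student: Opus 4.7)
The plan is to apply the Birman--Hilden theorem to the branched double cover $q: \Sigma_g \to \Sigma_g/\sigma(\iota)$ in order to identify the permutation action of $\sigma(B)$ on $\Fix(\sigma(\iota))$ with the standard permutation representation of $B_{2g+2}$, and then to read off the desired global fixed point for $B'$.

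First I would verify that $q$ is a topological branched double cover onto $S^2$. The derivative $D\sigma(\iota)_x$ at any fixed point is an orientation-preserving linear involution of $T_x\Sigma_g$, hence equals either $I$ or $-I$; the possibility $D\sigma(\iota)_x = I$ is excluded by the Thurston stability theorem (Theorem~\ref{theorem:thurston_stability}), since $\langle\sigma(\iota)\rangle \cong \Z/2$ is not locally indicable. Linearizing, $Y := \Sigma_g/\sigma(\iota)$ is a closed topological surface and $q$ is a branched double cover with branch set $\Delta := q(\Fix(\sigma(\iota)))$. By Lemma~\ref{lemma:fixset} and Euler characteristic, $Y \cong S^2$ and $|\Delta| = 2g+2$; moreover $q$ restricts to a bijection $\Fix(\sigma(\iota)) \to \Delta$.

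Next, since each $T_{c_i}$ commutes with $\iota$ in $\Mod(\Sigma_g)$, the diffeomorphism $\sigma(T_{c_i})$ commutes with $\sigma(\iota)$ in $\Diff(\Sigma_g)$ and hence descends to a homeomorphism $\bar f_i$ of $(S^2, \Delta)$. The permutation induced by $\sigma(T_{c_i})$ on $\Fix(\sigma(\iota))$ matches, via the above bijection, the permutation induced by $\bar f_i$ on $\Delta$; the latter depends only on the isotopy class of $\bar f_i$ in $\Mod(S^2, \Delta)$. The Birman--Hilden homomorphism $C(\iota) \to \Mod(S^2, \Delta)$ associated to $q$ sends $T_{c_i}$ to the half-twist along the arc $\bar c_i := q(c_i')$, for any $\sigma(\iota)$-invariant representative $c_i'$ of $c_i$. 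Because the $c_i$ are $\iota$-invariant and meet transversally in single points (forcing those intersections to lie in $\Fix(\iota)$, as $\iota$ permutes the intersection set), the arcs $\bar c_1, \ldots, \bar c_{2g+1}$ form a chain in $S^2$: consecutive arcs share exactly one endpoint in $\Delta$, and non-consecutive arcs are disjoint. Labeling $\Delta = \{p_1, \ldots, p_{2g+2}\}$ so that $\bar c_i$ joins $p_i$ to $p_{i+1}$, the half-twist along $\bar c_i$ acts on $\Delta$ as the transposition $(i, i+1)$.

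Combining these observations, $\rho \circ \sigma$ sends $T_{c_i}$ to $(i, i+1) = \mu(\sigma_i)$. Since $\sigma_i \mapsto T_{c_i}$ under the surjection $B_{2g+2} \to B$, the diagram commutes. For the second claim, $B'$ is the image of $B_{2g+1} \le B_{2g+2}$, whose image under $\mu$ is the stabilizer $S_{2g+1} \le S_{2g+2}$ of the letter $2g+2$; hence $\sigma(B')$ fixes $q^{-1}(p_{2g+2}) \in \Fix(\sigma(\iota))$, giving the desired global fixed point. The main obstacle is applying Birman--Hilden with respect to the a priori non-standard involution $\sigma(\iota)$: one needs to pin down the local geometry of $\sigma(\iota)$ (handled via Thurston stability) and then verify that the descended isotopy classes are the expected half-twists along a chain of arcs in $S^2$. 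These are topological facts insensitive to the particular smooth involution representing the isotopy class $\iota$, but they require care to state correctly in the $C^1$ setting.
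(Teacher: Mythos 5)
Your proof is correct but takes a genuinely different route from the paper's. The paper never leaves $\Sigma_g$: it builds explicit Dehn-twist representatives $\widetilde T_{c_i}$ supported on $\sigma(\iota)$-invariant annuli, checks directly that each induces the transposition $(i\ i{+}1)$ on $\Fix(\sigma(\iota))$, and then verifies that $\rho$ is constant on isotopy classes within $C(\sigma(\iota))$ by combining the continuity of $\rho$ on $C_{\Homeo(\Sigma_g)}(\sigma(\iota))$ with Proposition 9.4 of \cite{fm} (isotopic symmetric homeomorphisms are symmetrically isotopic). You instead descend to the branched quotient $(S^2,\Delta)$: Thurston stability pins down the local form $-I$ of $\sigma(\iota)$ at its fixed points, a $C^1$ (Bochner-type) linearization shows the quotient is a topological surface, an Euler characteristic count identifies it as $S^2$, and the permutation is read off from the Birman--Hilden chain of half-twists along the arcs $\bar c_i$. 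The two arguments share the essential Birman--Hilden input --- the well-definedness of your map $C(\iota)\to\Mod(S^2,\Delta)$ (or at least its restriction to $B$) is exactly the fact the paper extracts from Proposition 9.4 of \cite{fm} --- but yours makes the quotient explicit at the cost of extra machinery (the linearization in the $C^1$ setting, the chain-of-arcs analysis, and a preliminary Thurston-stability step that the paper only invokes later, in Step 4), while the paper's is more economical precisely by avoiding the quotient surface. If you polish this, you should spell out that one can simultaneously choose $\sigma(\iota)$-invariant representatives $c_i'$ realizing the minimal intersection pattern (e.g.\ by taking geodesics for a $\sigma(\iota)$-averaged metric, or by conjugating $\sigma(\iota)$ topologically to the standard hyperelliptic involution), since that is the step that makes the images $\bar c_i$ form an honest chain of disjoint-except-at-endpoints arcs in $S^2$.
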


\begin{proof}  
Let $\sigma_i \in B_{2g+2}$ denote the standard generator of $B_{2g+2}$ interchanging strands $i$ and $i+1$, so that $\mu(\sigma_i) = (i\ i+1)$. The homomorphism $B_{2g+2} \to B$ sends $\sigma_i$ to the Dehn twist $T_{c_i}$ indicated in Figure \ref{figure:hyperelliptic}. Let $\widetilde T_{c_i}$ denote a realization of this Dehn twist supported on a neighborhood of $c_i$ invariant under $\sigma(\iota)$. Then $\rho(\widetilde T_{c_i})$ is the involution $(i\ i+1) = \mu(\sigma_i) \in S_{2g+2}$. 

We next claim that if $\alpha, \alpha' \in C(\sigma(\iota))$ are isotopic, then $\rho(\alpha) = \rho(\alpha')$. Modulo the claim the result follows easily, since by the above paragraph each element of $B$ has some representative diffeomorphism (obtained by taking a suitable product of the $\widetilde T_{c_i}$) inducing the expected permutation.

The claim is most easily established by temporarily leaving concerns of smoothness behind. Let $A \subset \Homeo(\Sigma_g)$ denote the isotopy class of $\alpha, \alpha'$ within $\Homeo(\Sigma_g)$. Letting $C_{\Homeo(\Sigma_g)}(\sigma(\iota))$ denote the centralizer of $\sigma(\iota)$ within $\Homeo(\Sigma_g)$, observe that $\rho$ extends to a homomorphism $ \rho: C_{\Homeo(\Sigma_g)}(\sigma(\iota)) \to S_{2g+2}$. 

We claim that as a map of topological spaces (endowing $C_{\Homeo(\Sigma_g)}(\sigma(\iota))$ with the compact-open topology and $S_{2g+2}$ with the discrete topology), $\rho$ is continuous. Let $\phi \in C_{\Homeo(\Sigma_g)}(\sigma(\iota))$ and $x \in \Fix(\sigma(\iota))$ be given. Let $U \subset \Sigma_g$ be an open neighborhood such that $U \cap \Fix(\sigma(\iota)) = \{\phi(x)\}$. If $\psi \in C_{\Homeo(\Sigma_g)}(\sigma(\iota))$ is sufficiently close to $\phi$, then $\psi(x) \in U$, but as $\psi(x) \in \Fix(\sigma(\iota))$, it follows that $\psi(x) = \phi(x)$.

To establish the claim, it therefore suffices to show that $\alpha$ and $\alpha'$ lie in the same connected component of $C_{\Homeo(\Sigma_g)}(\sigma(\iota))$. Proposition 9.4 of \cite{fm} asserts that if $\phi, \psi \in C_{\Homeo(\Sigma_g)}(\sigma(\iota))$ are isotopic, then there exists an isotopy through elements of $C_{\Homeo(\Sigma_g)}(\sigma(\iota))$. The claim, and hence the result, follows.
\end{proof}

\begin{figure}[h!]
\labellist 
\small\hair 2pt 
\pinlabel $\iota$ at 422 400
\pinlabel $c_1$ at 522 370
\pinlabel $c_2$ at 590 410
\pinlabel $c_4$  at 655 410 
\pinlabel $c_{2g}$  at 770 410 
\pinlabel $c_3$  at 650 380 
\pinlabel $c_{2g+1}$  at 852 370 
\endlabellist 
  \centering
\includegraphics[scale=.5]{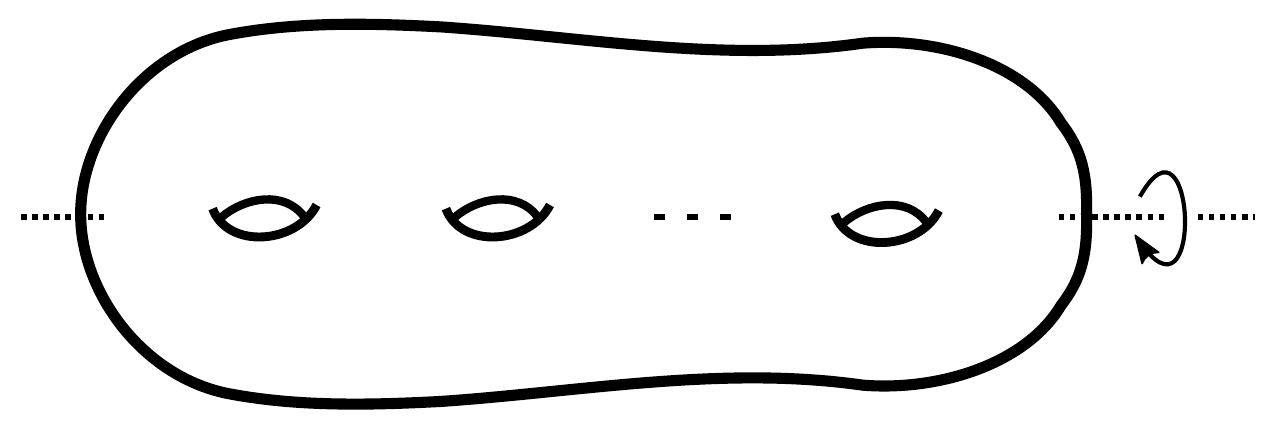}\hspace{0.5in}
\includegraphics[scale=.5]{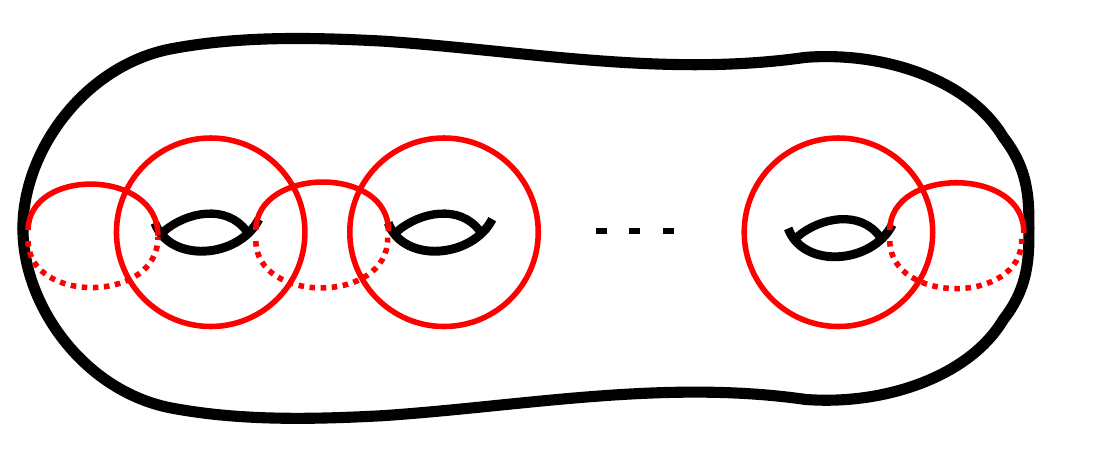}
      \caption{The hyperellitpic involution $\iota$ and curves $c_i$ whose Dehn twists $T_{c_i}$ generate a quotient of $B_{2g+2}$.}
      \label{figure:hyperelliptic}
\end{figure}

\noindent{\bf Step 4: Deriving the contradiction.}
From Steps 1 - 3 above, we have shown that if there is a realization $\sigma: \Mod(\Sigma_g) \to \Diff(\Sigma_g)$, then the strongly non-indicable subgroup $\sigma(B') \le \Diff(\Sigma_g)$ must act on $\Sigma_g$ with a global fixed point $p \in \Sigma_g$. Consider the homomorphism
\[
D_p \circ \sigma: B' \to \GL_2^+(\R).
\]
According to Lemma \ref{lemma:nonabelian}, as $B'$ is a quotient of $B_{2g+1}$, the image of $D_p \circ \sigma$ must be abelian. Letting $P \le B'$ denote any nontrivial finitely-generated perfect subgroup of $B'$, it follows that $\sigma(P)$ acts trivially on the tangent space $T_p\Sigma_g$. Theorem \ref{theorem:thurston_stability} then asserts that $P$ must be locally indicable, but this is impossible by assumption.
\end{proof}


\section{Push maps for spherical motion groups}\label{section:spherepush}
We turn now to Theorem \ref{theorem:spheres}. It is first necessary to establish the existence of the push homomorphisms $\mathcal P$ that are the higher-dimensional analogues of the homomorphism in Theorem \ref{theorem:birman}. Fix $k,n\ge1$. For $M=\R^{k+2}$ or $S^{k+2}$, consider the space $\Emb_n(S^k,M)$ of $C^1$ embeddings $\coprod_n S^k\to M$. The symmetric group $S_n$ acts on $\coprod_nS^k$ by permuting the components, and this induces an action on $\Emb_n(S^k,M)$ by precomposing an embedding by a permutation. Fix an embedding $\phi$ that is unlinked, and let $\Emb_{n}(S^{k},M;\phi)$ denote the path component of $\phi$. Define the \emph{configuration space }
\[\Conf_n(S^k,M)=\Emb_n(S^k,M;\phi)/S_n.\]
An element of $\Conf_n(S^k,M)$ is a collection of disjoint, unordered, unlinked spheres, each of which comes with a parameterization.

\begin{definition} \label{definition:sphericalmotion}
Let $[\phi]\in\Conf_n(S^k,M)$ denote the equivalence class of the embedding $\phi$. The group $B_n(S^k,M):=\pi_1\big(\Conf_n(S^k,M), [\phi]\big)$ is a \emph{spherical motion group}.\footnote{These groups were first studied by Dahm \cite{dahm}.}
\end{definition}

In order to state the analog of Theorem \ref{theorem:birman} for $B_n(S^k,M)$, let $\ca D(M)\le\Diff(M)$ be the group of compactly-supported $C^1$ diffeomorphisms isotopic to the identity, and let $\ca D(M,[\phi])\le\ca D(M)$ be the subgroup of diffeomorphisms that satisfy $[f\circ \phi]=[\phi]$. Viewing $\phi$ as defining a parameterization on its image $\text{Im}(\phi)\sbs M$, diffeomorphisms of $\ca D(M,[\phi])$ preserve $\text{Im}(\phi)$ together with the parameterization on each sphere, up to permutations. In particular, $f\in\ca D(M,[\phi])$ fixes \emph{pointwise} any component of $\text{Im}(\phi)$ taken to itself.

\begin{proposition}\label{proposition:ringpush}
Fix $n\ge1$. There is a homomorphism $\mathcal P : B_n(S^k,M) \to \pi_0\big(\ca D(M,[\phi])\big)$. The kernel of $\mathcal P$ is abelian. 
\end{proposition}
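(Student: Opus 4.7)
The strategy is to realize $\mathcal{P}$ as the connecting homomorphism in the long exact sequence of a suitable fibration, following the model of Birman's construction for surfaces (Theorem \ref{theorem:birman}). Consider the evaluation map
\[
e: \ca D(M) \longrightarrow \Conf_n(S^k, M), \quad f \longmapsto [f\circ\phi].
\]
By construction the fiber $e^{-1}([\phi])$ is exactly $\ca D(M,[\phi])$, since $[f\circ\phi]=[\phi]$ if and only if $f\circ\phi=\phi\circ\sigma$ for some $\sigma\in S_n$. I would first verify surjectivity of $e$: if $\phi'\in\Emb_n(S^k,M;\phi)$, a path from $\phi$ to $\phi'$ in the embedding space is an isotopy of embeddings, and a compactly supported version of the parametrized isotopy extension theorem produces an ambient isotopy in $\ca D(M)$ carrying $\phi$ to $\phi'$.

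Next I would argue that $e$ is a principal $\ca D(M,[\phi])$-bundle (or at least a Serre fibration). The map factors as
\[
\ca D(M)\xrightarrow{\tilde e}\Emb_n(S^k,M;\phi)\xrightarrow{p}\Conf_n(S^k,M),
\]
where $p$ is the covering map given by the free $S_n$-action, and $\tilde e(f)=f\circ\phi$ is a fiber bundle by parametrized isotopy extension (Cerf--Palais), with structure group $\{f\in\ca D(M):f\circ\phi=\phi\}$. Combining with the covering projection $p$ gives the desired bundle structure on $e$. The right multiplication action of $\ca D(M,[\phi])\le\ca D(M)$ on $\ca D(M)$ has orbits equal to the fibers of $e$, exhibiting this as a principal bundle and guaranteeing that the connecting map in the homotopy LES is a group homomorphism.

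From the LES, using that $\ca D(M)$ is path-connected by definition, we get an exact sequence
\[
\pi_1\big(\ca D(M)\big) \xrightarrow{\iota_*}\pi_1\big(\Conf_n(S^k,M), [\phi]\big)\xrightarrow{\partial} \pi_0\big(\ca D(M,[\phi])\big)\longrightarrow 1.
\]
We set $\mathcal P:=\partial$. By exactness, $\ker(\mathcal P)=\im(\iota_*)$, which is a quotient of $\pi_1\big(\ca D(M)\big)$. Since $\ca D(M)$ is a topological group, the Eckmann--Hilton argument shows $\pi_1\big(\ca D(M)\big)$ is abelian, and hence so is its quotient $\ker(\mathcal P)$.

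The main technical point will be verifying the fibration property for the evaluation map in the compactly supported setting. For $M=S^{k+2}$ this is classical. For $M=\R^{k+2}$ one should check that local trivializations produced by isotopy extension can be chosen to preserve compact support; this follows from the fact that $\text{Im}(\phi)$ is compact, so nearby embeddings differ from $\phi$ only in a fixed compact region and isotopies supported there lift to compactly supported ambient isotopies. Once this is in place, the remainder of the argument is formal.
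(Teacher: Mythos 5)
Your proposal is correct and follows the same route as the paper: the paper also defines $\mathcal P$ as the connecting map in the long exact sequence of the evaluation fibration $\ca D(M,[\phi])\to\ca D(M)\xrightarrow{\eta}\Conf_n(S^k,M)$ (citing Palais for the fibration property) and deduces that $\ker\mathcal P$ is a quotient of the abelian group $\pi_1\big(\ca D(M)\big)$. The only difference is that you spell out the isotopy-extension and compact-support details that the paper compresses into the citation of Palais.
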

\begin{proof} 

There is an evaluation map $\eta: \ca D(M)\ra\Conf_n(S^k,M)$ defined by $f\mapsto [f\circ\phi]$. By Palais \cite{palais} this map determines a fibration 
\[\ca D(M,[\phi])\ra\ca D(M)\xrightarrow\eta\Conf_n(S^k,M).\]
The long exact sequence of homotopy groups of this fibration gives an exact sequence 
\[\pi_1\big(\ca D(M)\big)\ra B_n(S^k,M)\xrightarrow{\ca P}\pi_0\big(\ca D(M,[\phi])\big).\]
This defines $\ca P$. Note that as $\ca D(M)$ is a topological group, $\pi_1\big(\ca D(M)\big)$ is abelian, from which it follows that $\ker\ca P$ is as well.  
\end{proof}

\section{Proof of Theorem \ref{theorem:spheres}} \label{section:sphereproof}
Once again, the situation can be expressed diagrammatically as follows:
\[
\xymatrix{												& \ca D(M,[\phi]) \ar[d]^\pi\\
B_n(S^k,M) \ar[r]_>>>>{\mathcal {P}}	\ar@{-->}[ur]^{\sigma}		& \pi_0\big( \ca D(M,[\phi])\big).
}
\]
We seek to obstruct the existence of a lift $\sigma$ of $\mathcal P$. The outline of the proof is essentially the same as for Theorem \ref{theorem:surfaces}. We will not reproduce Step 1 of Theorem \ref{theorem:surfaces}, as the Thurston stability theroem holds for any smooth manifold. Also, 
Step 2 of Theorem \ref{theorem:surfaces}, which concerns the group theory of $B_n$, needs no modification. As such, the proof of Theorem \ref{theorem:spheres} will begin with finding a nontrivial homomorphism $B_n\ra B_n(S^k,M)$. 

\para{Step 1: Produce nontrivial $B_n\ra B_n(S^k,M)$}  In this section we prove the following proposition. 

\begin{proposition}\label{proposition:hasbraids} If $M=\R^{k+2}$, then there is an embedding $B_n\hra B_n(S^k,M)$. If $M=S^{k+2}$, then there is a homomorphism $B_n\ra B_n(S^k,M)$ whose kernel is contained in the center $Z(B_n)$.
\end{proposition}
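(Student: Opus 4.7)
The plan is to realize $B_n$ inside $B_n(S^k,M)$ via ``planar braid'' motions of the spheres, then use the induced action on the fundamental group of the complement (obtained via the push map $\ca P$ of Proposition \ref{proposition:ringpush}) to control the kernel.

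\textbf{Construction.} Identify $\R^{k+2}\cong\R^2\times\R^k$, embedded in $M$ in the obvious way (when $M=S^{k+2}$, as $S^{k+2}\setminus\{\infty\}$). Place $n$ points $x_1,\ldots,x_n$ on a line in $\R^2\times\{0\}$ and let $\phi$ be the unlinked configuration of $n$ small, standardly parameterized round $k$-spheres centered at these points. Given a braid $b\in B_n$, represent it by a compactly supported isotopy $\psi_t:\R^2\to\R^2$ returning to the identity and permuting the $x_i$. The product isotopy $\Psi_t=\psi_t\times\id_{\R^k}$ of $\R^{k+2}$, extended by the identity across $M\setminus\R^{k+2}$, produces a loop $t\mapsto[\Psi_t\circ\phi]$ in $\Conf_n(S^k,M)$. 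Homotopies of planar isotopies yield homotopies of configuration-space loops, giving a well-defined homomorphism $\iota:B_n\to B_n(S^k,M)$.

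\textbf{Injectivity when $M=\R^{k+2}$.} Compose $\iota$ with $\ca P:B_n(S^k,\R^{k+2})\to\pi_0\ca D(\R^{k+2},[\phi])$ and then with the action on $\pi_1$ of the complement. Since $\ca D(\R^{k+2},[\phi])$ consists of compactly supported diffeomorphisms, we may fix a basepoint $p_0\in\R^{k+2}\setminus\im\phi$ outside all relevant supports, and all isotopies are likewise through compactly supported diffeomorphisms; this yields a homomorphism $\pi_0\ca D(\R^{k+2},[\phi])\to\Aut\big(\pi_1(\R^{k+2}\setminus\im\phi,p_0)\big)$. Because the spheres of $\phi$ are unknotted and unlinked, $\pi_1(\R^{k+2}\setminus\im\phi,p_0)\cong F_n$, freely generated by meridians. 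The composition $B_n\to\Aut(F_n)$ is the classical Artin representation: on the planar slice $\R^2\times\{0\}$, the diffeomorphism $\Psi_t$ restricts to $\psi_t$, and the inclusion $\R^2\setminus(\text{small disks around }x_i)\into\R^{k+2}\setminus\im\phi$ identifies loops around the $x_i$ with sphere meridians. Since Artin's representation is faithful, $\iota$ is injective.

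\textbf{Kernel when $M=S^{k+2}$.} The argument for injectivity now breaks down: $\ca D(S^{k+2},[\phi])$ contains no canonical basepoint that every isotopy must fix, so the induced action on $\pi_1$ of the complement is only defined up to inner automorphism. Thus we obtain only $\pi_0\ca D(S^{k+2},[\phi])\to\Out(F_n)$ (here we use $k\ge1$, so that compactifying $\R^{k+2}$ at infinity attaches a cell of dimension $\ge3$ and preserves the fundamental group of the complement). The composition $B_n\to\Out(F_n)$ is the Artin representation reduced modulo inner automorphisms, and it is classical that its kernel is exactly $Z(B_n)$: the full twist $\Delta^2$ acts as conjugation by $x_1\cdots x_n$, and the quotient $B_n/\pair{\Delta^2}\cong\Mod(\Sig_{0,n+1})$ embeds into $\Out(F_n)$ via Dehn--Nielsen--Baer. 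It follows that $\ker\iota\subseteq Z(B_n)$.

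\textbf{Main obstacle.} The principal technical point is identifying the composition $B_n\to\Aut(F_n)$ (respectively $\Out(F_n)$) with the Artin representation: one must verify that the meridians of the unlinked spheres correspond, via the embedded planar disk, to the standard generators of $\pi_1(\R^2\setminus\{x_1,\ldots,x_n\})$, and that $\Psi_t=\psi_t\times\id$ acts on them exactly as $\psi_t$ acts on those generators. Once this identification is in place, both conclusions reduce to the classical facts about the faithfulness and outer kernel of Artin's representation.
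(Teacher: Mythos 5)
Your proof is correct in outline and runs parallel to the paper's, but with two notable variations. First, where the paper builds the inclusion $B_n\to B_n(S^k,M)$ by passing through a subspace $\ca C\subset\Conf_n(S^k,M)$ and identifying it with Brendle--Hatcher's untwisted wicket space, you construct the homomorphism directly from a loop in $\Conf_n(\R^2)$ by taking products of planar isotopies with the identity in the $\R^k$ directions. This is more elementary and bypasses the citation to Brendle--Hatcher, which the paper only uses to produce the map $B_n\to\pi_1(\ca C)$ in the first place; both you and the paper end up relying on the Artin representation for injectivity, so the wicket-space machinery isn't doing essential work. Second, for the kernel when $M=S^{k+2}$, the paper invokes Artin's explicit description of the image of the Artin representation and computes $\mathrm{Inn}(F_n)\cap\im(\alpha)$ directly, while you instead use the capping homomorphism $\Mod(\disk,n)\to\Mod(S_{0,n+1})$ and Dehn--Nielsen--Baer. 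Both routes are classical and sound.

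One small imprecision to flag: you write $B_n/\pair{\Delta^2}\cong\Mod(\Sig_{0,n+1})$, but the capping map $\Mod(\disk,n)\to\Mod(S_{0,n+1})$ is not surjective; its image is the stabilizer of the marked point coming from the capped-off disk, which has index $n+1$. The statement you actually need is only that the capping map has kernel $\pair{\Delta^2}=Z(B_n)$ and that its image lies in a group that injects into $\Out(F_n)$, and that the resulting composition $B_n\to\Out(F_n)$ agrees with the projection of the Artin representation; all of that is true and the argument survives, but the displayed isomorphism as written is off. Also worth spelling out is the check that the loop $t\mapsto[\Psi_t\circ\phi]$ really closes up in $\Conf_n(S^k,M)$, i.e.\ that $\Psi_1$ returns each round $k$-sphere to a standardly parameterized round $k$-sphere up to the permutation; this is easy to arrange by choosing $\psi_1$ to restrict to translations near the $x_i$, but deserves a sentence.
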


\begin{proof}
To produce the desired homomorphism, we first find a subspace $\ca C\sbs\Conf_n(S^k,M)$ such that $\pi_1(\ca C)$ contains $B_n$. This uses work of Brendle--Hatcher. Then we will study the composition $B_n\hra\pi_1(\ca C)\ra \pi_1\big(\Conf_n(S^k,M)\big)$ by looking at the induced action of $B_n$ on $\pi_1\big(M\bs\coprod_nS^k\big)\simeq F_n$. For $M=\R^{k+2}$ this action coincides with the Artin representation $B_n\ra\text{Aut}(F_n)$, which is well-known to be injective. For $M=S^{k+2}$, we obtain instead a quotient of the Artin representation $B_n\ra\text{Out}(F_n)$, and we explain why its kernel is $Z(B_n)$.  

To define $\ca C$, give $\R^{k+2}$ coordinates $(x,y,z,w_1,\ld,w_{k-1})$ and fix an embedding $f: S^k\hra\R^{k+2}$ whose image is the sphere of radius 1 centered at the origin in $\R^{k+1}\simeq \{(x,y,z,w_1,\ld,w_{k-1}): x=0\}$. Consider the space $\ca E$ of embeddings $\phi:\coprod_nS^k\ra\R^{k+2}$ where the embedding on each component is the composition of $f$ with a dilation of $\R^{k+2}$ followed by a translation in the $xy$-plane. The quotient $\ca C=\ca E/S_n$ is a subspace of $\Conf_n(S^k,\R^{k+2})$. We also obtain $\ca C\sbs\Conf_n(S^k,S^{k+2})$ by choosing an embedding $\R^{k+2}\hra S^{k+2}$. 

There is a map $a:\ca C\ra\ca{UW}_n$ to the \emph{untwisted wicket space} of Brendle--Hatcher \cite{brendle_hatcher} obtained by restricting an embedding $\phi:\coprod_nS^k\ra\R^{k+2}$ to $\coprod_nV$, where $V\sbs S^k$ is the subspace $f^{-1}\{(0,y,z,0,\ld,0)\}\simeq S^1$. The map $a$ is a homeomorphism by the construction of $\ca C$; furthermore, $\pi_1(\ca{UW}_n)$ contains a braid group by \cite[Proposition 3.1]{brendle_hatcher}. In $\pi_1(\ca C)$, this braid group is generated by motions $\rho_1,\ld,\rho_{n-1}\in\pi_1(\ca C)$ that exchange the $i^{th}$ and $(i+1)^{st}$ spheres of a fixed embedding $\phi$, passing the $(i+1)^{st}$ sphere through the $i^{th}$ sphere. See Figure \ref{figure:motion}.

\begin{figure}[h!]
\labellist 
\small\hair 2pt 
\pinlabel $x_1$ at 72 55
\pinlabel $x_2$  at 162 55 
\pinlabel $\rho_1(x_1)$ at 797 55 
\pinlabel $\rho_1(x_2)$ at 625 67
\endlabellist 
  \centering
\includegraphics[scale=.45]{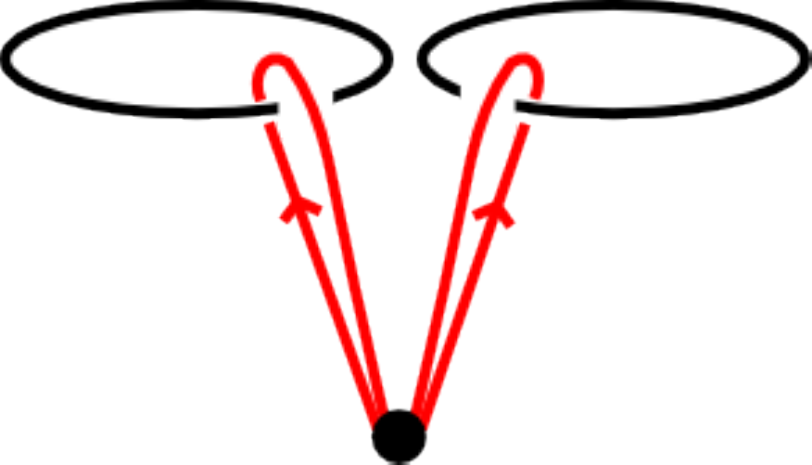}
\includegraphics[scale=.45]{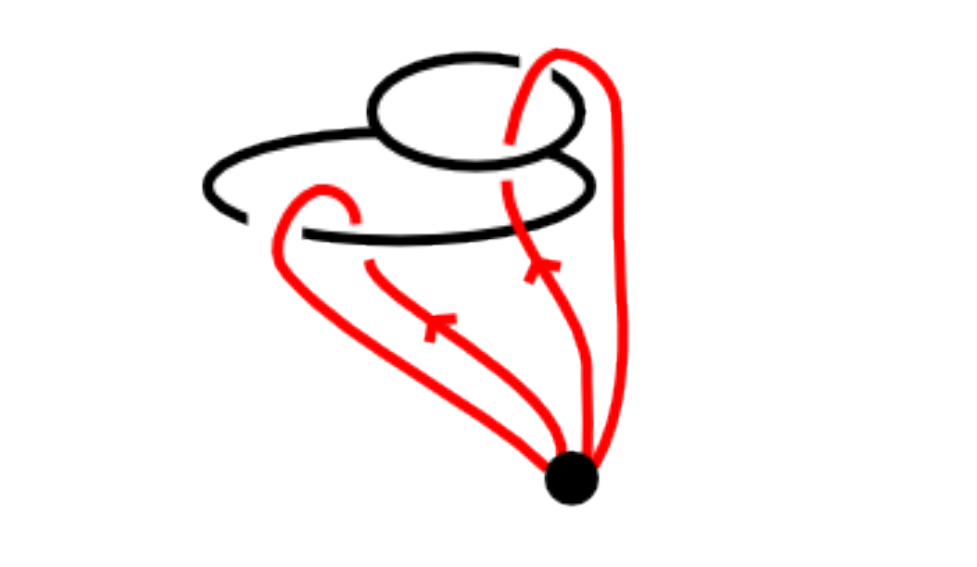}
\includegraphics[scale=.45]{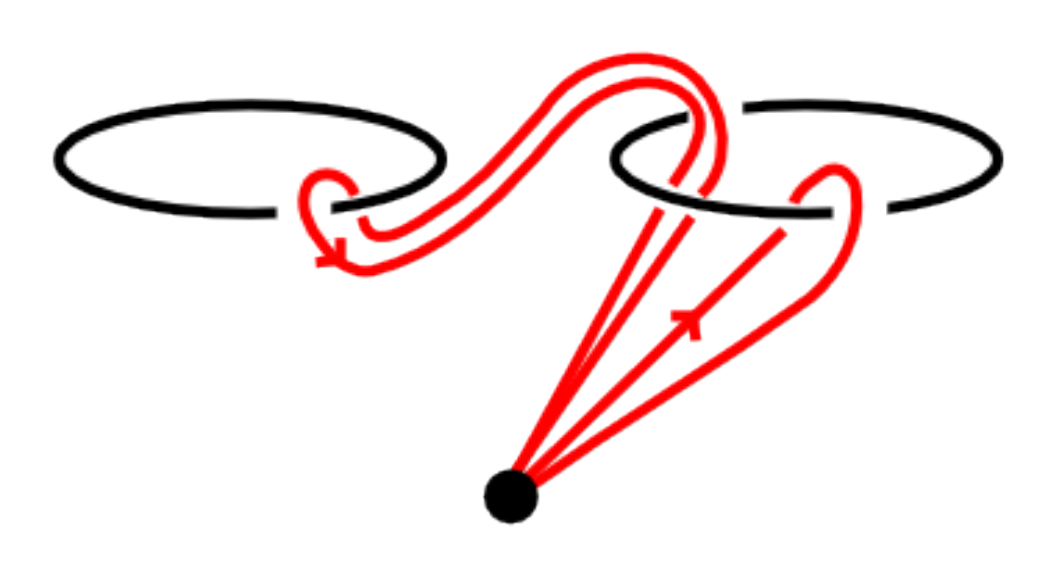}
      \caption{A 3-frame movie of the motion $\rho_1$ and the result on $\pi_1(M\bs\coprod_nS^k)$.}
      \label{figure:motion}
\end{figure}

Next we determine how $B_n\le\pi_1(\ca C)$ acts on $\pi_1\big(M\bs\coprod_nS^k\big)$. The homomorphism $\ca P$ of Proposition \ref{proposition:ringpush} gives a homomorphism $\pi_1(\ca C)\ra\pi_0\big(\ca D(M,[\phi])\big)$. The latter group acts on $\pi_1(M\bs \im\phi)$. If $M=\R^{k+2}$, then we can define $\pi_0\big(\ca D(M,[\phi])\big)\ra\text{Aut}\big(\pi_1(M\bs\im\phi, *)\big)$ act by identifying $\R^{k+2}\simeq\text{int}(\disk^{k+2})$, identifying $\ca D(M,[\phi])$ with the corresponding subgroup of $\Diff(\disk^{k+2})$, and choosing $*\in\partial\disk$. If $M=S^{k+2}$, then we cannot choose a global fixed point for the action of $\ca D(M,\phi)$ on $M\bs\im\phi$, and so we only obtain $\pi_0\big(\ca D(M,[\phi])\big)\ra\text{Out}\big(\pi_1(M\bs \im\phi)\big)$. 

The group $\pi_1(M\bs\im\phi)$ is free by the following lemma.

\begin{lemma}\label{lemma:fundamentalgroup}
Fix $k\ge2$. Let $\coprod_nS^k\hookrightarrow\R^{k+2}$ be an unlinked embedding. Then $\pi_1\big(\R^{k+2}\backslash\coprod_nS^k\big)$ is isomorphic to the free group $F_n$.
\end{lemma}

\begin{proof} For definiteness, we will specify a particular embedding $\phi: \coprod_nS^k \to \R^{k+2}$ where the $i^{th}$ sphere is mapped to the equator of the sphere of radius $1/4$ centered at $(i,0, \dots, 0) \in \R^{k+2}$. We proceed by induction on $n$. For the base case $n=1$, first note that 
\[S^{k+2}\cong\pa(\bb D^{k+1}\ti\bb D^2)=S^{k}\ti\bb D^2\bigcup_{S^{k}\ti S^1}\bb D^{k+1}\ti S^1.\]
It follows that $\pi_1\big(S^{k+2}\bs S^k)\cong\pi_1(\disk^{k+1}\ti S^1)\cong\Z$. Then also $\pi_1\big(\re^{k+2}\bs S^k\big)\cong\Z$, since removing a single point from a $(m\ge3)$-manifold does not change the fundamental group. 

For the inductive step, take $\phi$ as above and decompose $\re^{k+2}$ into open sets 
\[U=\{(x_1,\ld,x_{k+2}): x_1<n-\frac{1}{2}+\epsilon\}\>\>\text{ and }\>\>V=\{(x_1,\ld,x_{k+2}): x_1>n-\frac{1}{2}-\epsilon\}\] for any small positive $\epsilon$. By construction $U$ contains the first $n-1$ spheres and $V$ contains the $n^{th}$ sphere. Since $U\cap V$ is contractible, by Seifert--van Kampen, we have
\[\pi_1\big(\re^{k+2}\bs\coprod_nS^k\big)\cong\pi_1\big(\re^{k+2}\bs\coprod_{n-1}S^k\big)*\pi_1(\re^{k+2}\bs S^k)\cong F_{n-1}*\Z\cong F_n.\]
The second isomorphism uses the inductive hypothesis and the base case. 
\end{proof}

\begin{remark}The lemma obviously implies that $\pi_1\big(S^{k+2}\backslash\coprod_nS^k\big)\simeq F_n$. \end{remark}

We now have homomorphisms 
\[\beta: B_n\ra\pi_1(\ca C)\ra\pi_1\big(\Conf_n(S^k,\R^{k+2})\big)\ra\Aut(F_n)\]
and
\[\gamma: B_n\ra\pi_1(\ca C)\ra\pi_1\big(\Conf_n(S^k,S^{k+2})\big)\ra\Out(F_n).\]

To prove Proposition \ref{proposition:hasbraids} we show that $\beta$ is injective and that $\ker\gamma=Z(B_n)$. 

\begin{lemma}\label{lemma:injective}
The homomorphism $\beta$ is injective. 
\end{lemma}
\begin{proof}
There is another homomorphism $\alpha:B_n\ra\Aut(F_n)$ (sometimes called the \emph{Artin representation}) induced by the action of the mapping class group $\Mod(\bb D,n)\cong B_n$ on $\pi_1(\disk\bs\{n$ points$\})\cong F_n$. It is a well-known theorem of Artin that $\alpha$ is injective (see \cite{artin} or \cite[Corollary 1.8.3]{birman}). We prove the lemma by showing that $\beta$ and $\alpha$ coincide after making the right identifications. 

Choose a configuration $Y=\{y_1,\ld,y_n\}\sbs\disk$ as in Figure \ref{figure:disk}. Let $\{\sigma_1,\ld,\sigma_{n-1}\}$ be the standard generating set for $B_n$ (c.f.\ Lemma \ref{lemma:nonabelian}). The isomorphism $B_n\xrightarrow\sim\Mod(\disk,n)$ is defined by sending $\si_i$ to the mapping class that exchanges $y_i$ and $y_{i+1}$ by moving them counterclockwise around their midpoint. We choose generators $\eta_i$ for $\pi_1(\disk\bs Y,*)\cong F_n$ as in Figure \ref{figure:disk}. It is easy to compute (c.f.\ Figure \ref{figure:disk}) \[\alpha(\si_i):\left\{\begin{array}{rlll}\eta_j&\mapsto& \eta_j&\text{ if }j\neq i,i+1\\\eta_i&\mapsto& \eta_{i+1}\\\eta_{i+1}&\mapsto& \eta_{i+1}\eta_i\eta_{i+1}^{-1}\end{array}\right.\]

On the other hand, the inclusion $B_n\hookrightarrow\pi_1\big(\text{RConf}_n(S^k,\re^{k+2})\big)$ sends $\sigma_i$ to the motion $\rho_i$ defined above (Figure \ref{figure:motion}). We identify $\pi_1(\re^{k+2}\bs\coprod_nS^k)\cong F_n$ as follows. Fix a basepoint $*\in\re^{k+2}\bs\coprod_nS^k$, and choose an embedding $\coprod\disk^{k+1}\ra\re^{k+2}$ such that the boundary of the $i^{th}$ disk $D_i$ is the $i^{th}$ sphere. Then $\pi_1(\re^{k+2}\bs\coprod_nS^k,*)$ is generated by loops $\ga_1,\ld,\ga_n:[0,1]\ra\re^{k+2}\bs\coprod_nS^k$ such that $\ga_i\cap D_j=\emptyset$ for $i\neq j$ and $\ga_i$ has a single, positive transverse intersection with $D_i$. 
Then for any $\ga\in\pi_1(\re^{k+2}\bs\coprod_nS^k,*)$, expressing $\rho_i(\ga)\in F_n$ as a word in $\ga_1,\ld,\ga_n$ reduces to computing the intersection of $\rho_i(\ga)$ with the disks $D_1,\ld,D_n$. From this it is easy to see $\rho_i$ sends $\ga_i$ to $\ga_{i+1}$, sends $\ga_{i+1}$ to $\ga_{i+1}\ga_i\ga_{i+1}^{-1}$, and fixes $\ga_j$ for $j\neq i,i+1$; see Figure \ref{figure:motion2}. Since $\rho_i=\beta(\sigma_i)$, this shows that $\beta$ and $\alpha$ agree, as desired.
\end{proof}

\begin{lemma}
The kernel of $\gamma$ is equal to the center $Z(B_n)$. 
\end{lemma}
\begin{proof}
By the proof of Lemma \ref{lemma:injective}, $\gamma$ is the composition of the Artin representation $\alpha:B_n\ra\text{Aut}(F_n)$ with the projection $\text{Aut}(F_n)\ra\text{Out}(F_n)$. Thus it suffices to understand this composition. 

To describe $B_n\xrightarrow{\alpha}\text{Aut}(F_n)\ra\text{Out}(F_n)$, we use a stronger version of the theorem of Artin mentioned in the proof of Lemma \ref{lemma:injective} that describes the image of $\alpha$ explicitly. Let $F_n$ be generated by $\eta_1,\ld,\eta_n$ as in Figure \ref{figure:disk}. Then $\phi\in\text{im}(\al)$ if and only if there exists $A_1,\ld,A_n\in F_n$ and $\tau\in S_n$ such that 
\begin{enumerate}[(i)]
\item $\phi(\eta_i)=A_i\>\eta_{\tau(i)}\>A_i^{-1}$ for $1\le i\le n$, and 
\item $\phi(\eta_1\cdots\eta_n)=\eta_1\cdots\eta_n$. 
\end{enumerate} 
See \cite{artin} or \cite[Theorem 1.9]{birman}. From this it quickly follows that $\phi\in\text{Inn}(F_n)\cap\text{im}(\al)$ if and only if $\phi$ is conjugation by $(\eta_1\cd\eta_n)^r$ for some $r\in\Z$ (we must have $A_1=A_2=\cd=A_n$ and $A_1$ must commute with $\eta_1\cd\eta_n$). Now the lemma follows by checking that $\al(\Delta)=\text{conj}(\eta_1\cd\eta_n)$, where $\Delta\in B_n$ is the full twist (which generates $Z(B_n)$). 
\end{proof}
This completes the proof of Proposition \ref{proposition:hasbraids}. 
\end{proof}
\begin{figure}
\labellist 
\small\hair 2pt 
\pinlabel $y_1$ at 215 630 
\pinlabel $y_2$ at 270 630 
\pinlabel $y_n$ at 350 630 
\pinlabel $\eta_1$ at 200 677 
\pinlabel $\eta_2$ at 268 690 
\pinlabel $\eta_n$ at 335 676 
\pinlabel $*$ at 268 570 
\pinlabel $\alpha(\si_1)(\eta_2)$ at 480 685
\pinlabel $\alpha(\si_1)(\eta_1)$ at 490 555
\endlabellist 
  \centering
\includegraphics[scale=.55]{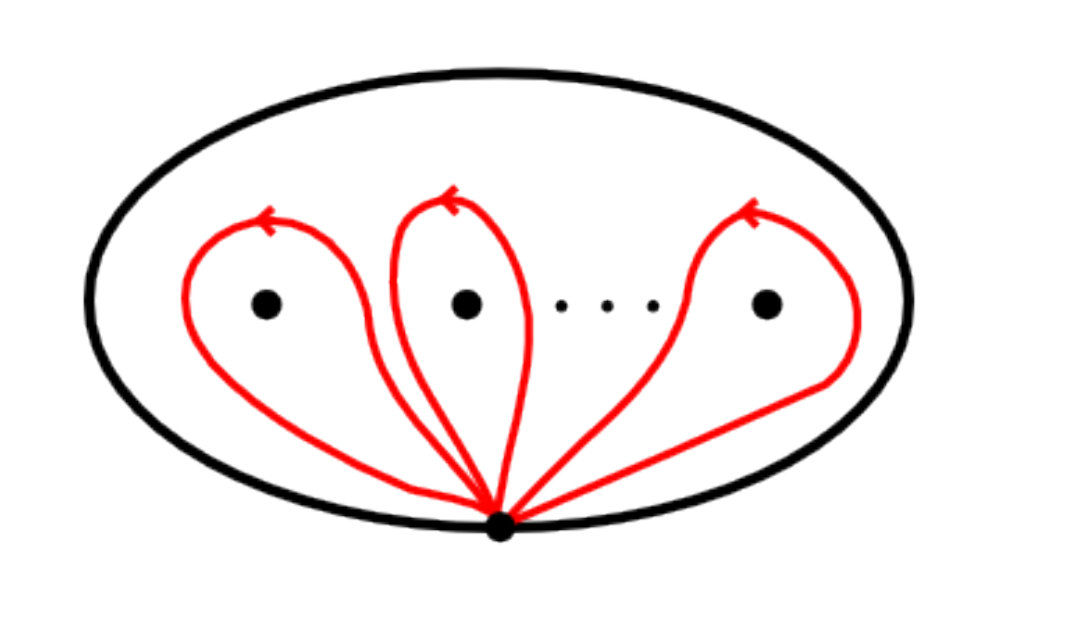}\hspace{.3in}
\includegraphics[scale=.55]{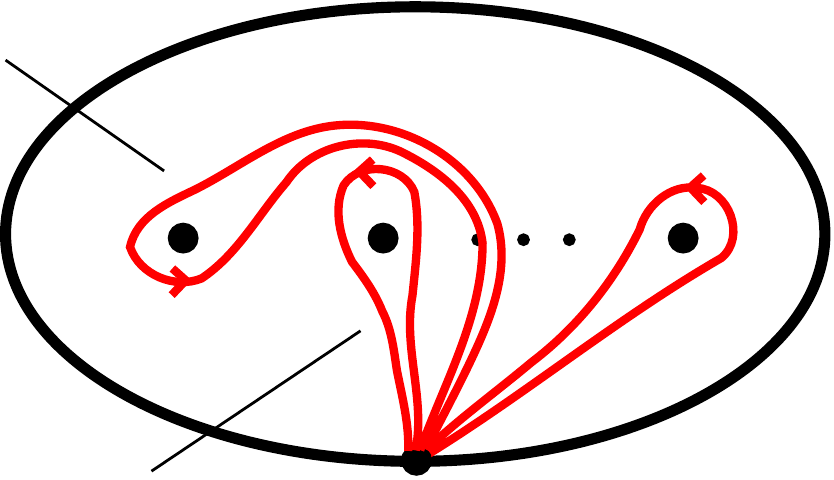}
      \caption{The braid group $B_n\cong\Mod(\bb D,n)$ acting on $\pi_1(\bb D\bs Y_n)$.}
      \label{figure:disk}
\end{figure}

\begin{figure}
\labellist 
\small\hair 2pt 
\pinlabel $\rho_1(\ga_1)$ at 405 405 
\pinlabel $\rho_1(\ga_2)$ at 200 405 
\endlabellist 
  \centering
\includegraphics[scale=.5]{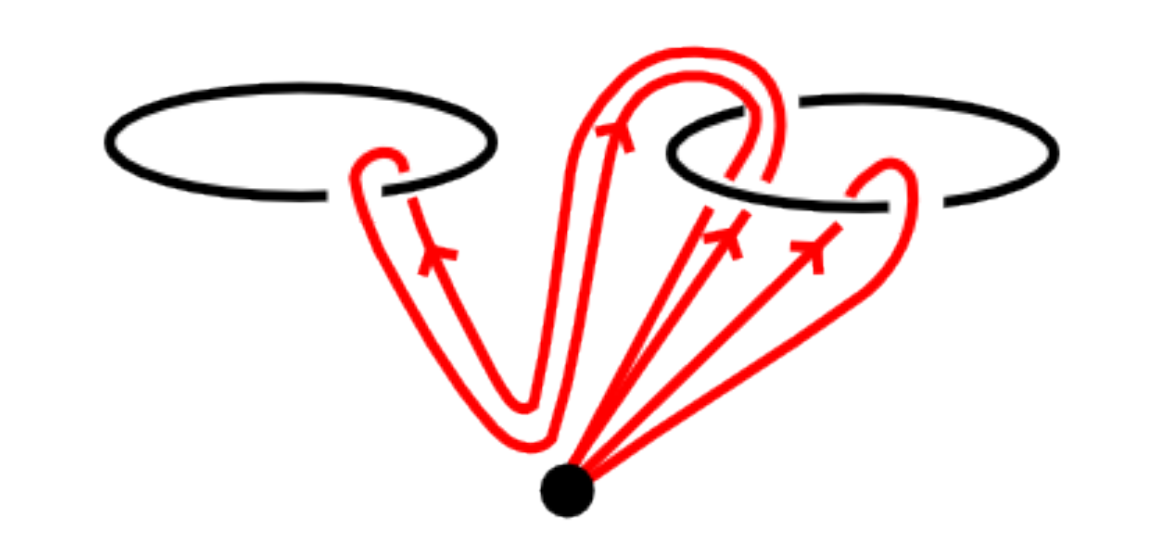}
      \caption{An illustration showing that $\rho_1(\ga_1)=\ga_2$ and $\rho_1(\ga_2)=\ga_2\ga_1\ga_2^{-1}$.}
      \label{figure:motion2}
\end{figure}

\para{Step 2: Reduction to Thurston stability} For $M=\R^{k+2}$ we will use the following easy corollary to Thurston stability (Theorem \ref{theorem:thurston_stability}).

\begin{corollary}\label{cor:ts}
Let $M$ be a noncompact manifold. Then the group $\Diff_c(M)$ of compactly supported $C^1$ diffeomorphisms is locally indicable (and hence any subgroup is also locally indicable).
\end{corollary}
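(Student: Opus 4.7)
The plan is to reduce directly to Theorem \ref{theorem:thurston_stability} by producing, for any finitely generated subgroup of $\Diff_c(M)$, a global fixed point at which every element has derivative equal to the identity. The key observation is a ``support'' argument that exploits the noncompactness of $M$.

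First I would fix a nontrivial finitely generated subgroup $\Gamma \le \Diff_c(M)$ and choose a finite generating set $f_1,\dots,f_k \in \Gamma$. Each $f_i$ has compact support, say $K_i \subset M$, so $K \coloneq K_1 \cup \cdots \cup K_k$ is compact. Because the support of a composition of diffeomorphisms is contained in the union of the individual supports, every element of $\Gamma$ (being a word in the $f_i^{\pm 1}$) restricts to the identity on $M \setminus K$. Since $M$ is noncompact, I can then choose a point $x \in M \setminus K$; by construction, every $g \in \Gamma$ is the identity on a whole neighborhood of $x$, so in particular $g(x) = x$ and $(Dg)_x = I$.

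This places $\Gamma$ inside the group $\mathcal{G}$ of Theorem \ref{theorem:thurston_stability}, which asserts that $\mathcal{G}$ is locally indicable. By Definition \ref{definition:locindic}, this means every nontrivial finitely generated subgroup of $\mathcal{G}$ admits a surjection to $\Z$; applied to $\Gamma$ itself, we conclude that $\Gamma \twoheadrightarrow \Z$, hence $H^1(\Gamma, \R) \ne 0$. As $\Gamma$ was an arbitrary nontrivial finitely generated subgroup of $\Diff_c(M)$, this shows $\Diff_c(M)$ is locally indicable, and the parenthetical remark about subgroups follows immediately from the definition.

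There is really no hard step here: the only subtlety is the observation that passing to a finitely generated subgroup allows one to bound the support uniformly, which is precisely what fails for all of $\Diff_c(M)$ at once when $M$ is noncompact but is automatic once one restricts to finitely many generators. The noncompactness of $M$ is then used in exactly one place, to produce the point $x$ outside the combined support.
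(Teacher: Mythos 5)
Your proof is correct and follows the same approach as the paper: pass to a finitely generated subgroup, bound the union of the generators' supports by a compact set, use noncompactness of $M$ to find a point $x$ outside it, and then apply Theorem \ref{theorem:thurston_stability} directly. (Incidentally, the paper's proof says ``intersection of the supports'' where it clearly means ``union,'' as you have it.)
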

\begin{proof}
Let $\Ga\le\Diff_c(M)$ be a finitely generated subgroup. The intersection of the supports of the generators is compact, so $\Ga$ acts trivially on a neighborhood of some $x\in M$. Thus $\Ga\le\ca G$, and there exists a surjection $\Ga\ra\Z$ by Thurston stability. 
\end{proof}

For the spherical motion groups $B_n(S^k,S^{k+2})$, there is one additional step that is required in the reduction process. Below, $\Diff(S^{k+2},S^k)$ denotes the group of diffeomorphisms of $S^{k+2}$ that restrict to the identity on the image of a fixed embedding $S^k\rightarrow S^{k+2}$. 
\begin{proposition}\label{proposition:B}
Let $\Gamma \le \Diff(S^{k+2},S^k)$ be finitely generated. If $\Gamma$ is strongly non-indicable, then there is a homomorphism $f: \Gamma \to GL^+_2(\R)$ with nonabelian image.
\end{proposition}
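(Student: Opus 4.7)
The plan is to construct $f$ from the derivative action of $\Gamma$ on the 2-dimensional normal space to $S^k$ at a chosen fixed point, and then to invoke Thurston stability to rule out abelian image. This is a direct adaptation of the final step of the proof of Theorem~\ref{theorem:surfaces}, replacing the tangent space at a marked point with the normal space to the fixed sphere.

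First I would fix any point $p$ in the image of the embedded $S^k \hookrightarrow S^{k+2}$. Every $g \in \Gamma$ fixes $p$ and restricts to the identity on $S^k$, so the derivative $D_pg \in \GL(T_p S^{k+2})$ acts trivially on the $k$-dimensional subspace $T_p S^k$, and hence descends to a linear automorphism of the 2-dimensional quotient $N_p := T_p S^{k+2}/T_p S^k$. Since $g$ is orientation-preserving on $S^{k+2}$ while acting trivially on $T_p S^k$, the induced map on $N_p$ lies in $\GL_2^+(\R)$, and this yields a homomorphism $f: \Gamma \to \GL_2^+(\R)$.

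The content of the proposition is to rule out the possibility that $f(\Gamma)$ is abelian. Suppose for contradiction that it is; by strong non-indicability, choose a nontrivial finitely-generated perfect subgroup $P \le \Gamma$. Then $f(P) \le [f(\Gamma), f(\Gamma)] = \{I\}$, and combined with the identity action on $T_p S^k$, this forces $D_p g = I$ for every $g \in P$. Thus $P$ is contained in the Thurston-stability group $\mathcal{G}$ of Theorem~\ref{theorem:thurston_stability}, which asserts that $P$ is locally indicable. This contradicts the hypothesis that $P$ is nontrivial, finitely generated, and perfect.

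I anticipate no serious obstacles in this argument; the only point requiring care is verifying that the induced derivative on $N_p$ lies in the orientation-preserving subgroup $\GL_2^+(\R)$, which follows because the orientation on $T_pS^{k+2}$ decomposes as the product of the orientations on $T_p S^k$ (preserved since $g$ acts as the identity there) and on $N_p$.
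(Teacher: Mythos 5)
Your construction of $f$ is exactly the paper's map $p(g)=A_g$, and the overall strategy matches, but there is a genuine gap in the final step. From $f(P)=\{I\}$ and the fact that every $g\in P$ acts as the identity on $T_pS^k$, you conclude "this forces $D_pg=I$." That implication is false: a linear map can act as the identity on a subspace and on the corresponding quotient while still being a nontrivial unipotent shear. Concretely, in a basis adapted to $T_pS^k\subset T_pS^{k+2}$, the derivative has block upper-triangular form
\[
(D g)_p=\left(\begin{array}{c|c} I_k & V_g\\ \hline 0 & A_g\end{array}\right),
\]
and your hypotheses only give $A_g=I$; the off-diagonal block $V_g$ may be nonzero, so $P$ need not land in the Thurston-stability group $\mathcal{G}$.

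The paper closes this gap with one more application of perfectness. On $\ker p$ the assignment $g\mapsto V_g$ is a homomorphism into the additive (hence abelian) group $M_{k,2}(\R)$, since for $A_g=A_h=I$ one computes $V_{gh}=V_g+V_h$. Because $P$ is perfect, this homomorphism must vanish, and only then does $(Dg)_p=I$ for all $g\in P$, putting $P$ inside $\mathcal{G}$ and allowing Thurston stability to deliver the contradiction. Your proof becomes correct if you insert this additional step; without it, the conclusion $D_pg=I$ is unjustified.
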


\begin{proof}
Choose $x \in S^k$. Then there are coordinates in which any $g \in \Diff(S^{k+2}, S^k)$ has derivative given by
\[
(Dg)_x = 
\left ( \begin{array}{c|c}
I_{k-2}	& V_g\\ \hline
0		& A_g
\end{array} \right).
\]
In this setting, $V_g \in M_{k-2,2}(\R)$ is a $(k-2) \times 2$ matrix, and $A_g \in GL^+_2(\R)$. Denote by $p: \Diff(S^{k+2},S^k) \to GL^+_2(\R)$ the homomorphism given by $p(g) = A_g$. 

Let $\Gamma \le \Diff(S^{k+2},S^k)$ be strongly non-indicable, and let $\Gamma' \le \Gamma$ be a finitely-generated perfect subgroup. We claim that $p: \Gamma \to GL^+_2(\R)$ has nonabelian image. If not, then $\Gamma' \le \ker p$. In this case, there is a map $V: \Gamma' \to M_{k-2, 2}(\R)$ defined by $V(g) = V_g$. As $M_{k-2, 2}(\R)$ is abelian and $\Gamma'$ is perfect, $V$ must be trivial. But then Thurston stability implies that $\Gamma'$ is locally indicable, a contradiction. 
\end{proof}

To complete the proof of Theorem \ref{theorem:spheres}, suppose $\sigma: B_n(S^k,M) \to\ca D(M, [\phi])$ is a lift of $\mathcal P$. If $M=\R^{k+2}$, then $B_n\le B_n(S^k,M)$ by Proposition \ref{proposition:hasbraids}. By Proposition \ref{proposition:ringpush}, $\si\big([B_n,B_n]\big)\le\ca D(M,[\phi])$ is a nontrivial subgroup. Since it is finitely-generated and perfect, $\ca D(M,[\phi])$ is strongly non-indicable. However, $\ca D(M,[\phi])\le\Diff_c(\R^k)$, so this contradicts Corollary \ref{cor:ts}.

In the case $M=S^{k+2}$, consider the homomorphism $j:B_n \ra B_n(S^k,M)$ provided by Proposition \ref{proposition:hasbraids}. Take a further subgroup $B_{n-1} \le B_n$ so that $\sigma\big(j(B_{n-1})\big)$ fixes some component of $\text{Im}(\phi)$ pointwise. By Propositions \ref{proposition:ringpush} and \ref{proposition:hasbraids}, the image of $B_{n-1}$ in $\ca D(S^{k+2}, [\phi])$ is nontrivial, and $\sigma([B_{n-1}, B_{n-1}])$ is a nontrivial finitely-generated perfect subgroup. Consequently $\sigma(B_{n-1})$ is strongly non-indicable. By Proposition \ref{proposition:B}, there is a homomorphism $f: \sigma(B_{n-1}) \to \GL_2^+(\R)$ with nonabelian image, but this contradicts Lemma \ref{lemma:nonabelian}. \qed

\section{Extensions of the main theorems}\label{section:extension}

In this section we give a strengthening of Theorems \ref{theorem:surfaces} and \ref{theorem:spheres} using a result of Parwani \cite[Theorem 1.4]{parwani} building off of work of Deroin-Kleptsyn-Navas \cite{dkn}. 

\begin{theorem}[Parwani]\label{theorem:parwani}
Let $G$ and $H$ be two finitely generated groups such that $H_1(G;\Z)=0=H_1(H;\Z)$. Then for any $C^1$ action of $G\ti H$ on $S^1$, either $G\ti1$ or $1\ti H$ acts trivially.
\end{theorem}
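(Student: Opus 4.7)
\textit{Proof proposal.} The plan is to argue by contradiction, combining the Deroin--Kleptsyn--Navas (DKN) dichotomy for $C^1$ circle actions with the Thurston stability theorem (Theorem \ref{theorem:thurston_stability}); note that $H_1(\Gamma;\Z)=0$ is equivalent to $\Gamma$ being perfect, which is in direct tension with local indicability. Assume that both $G$ and $H$ act nontrivially on $S^1$, and aim to derive a contradiction.

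I would begin with a preliminary reduction: a finitely generated perfect group $\Gamma$ cannot act nontrivially on $S^1$ while preserving a Borel probability measure $\mu$. Indeed, such a measure determines a rotation number homomorphism $\Gamma \to \R/\Z$ which must vanish on $\Gamma$ since $\Gamma$ is perfect, so every $\gamma \in \Gamma$ has rotation number $0$ and, by a standard argument, fixes $\mathrm{supp}(\mu)$ pointwise. Applying Thurston stability at such a fixed point on each complementary open interval of $\mathrm{supp}(\mu)$, and propagating via a standard open-and-closed connectedness argument (using perfectness each time), forces $\Gamma$ to act trivially. Consequently, neither $G$ nor $H$ preserves an invariant probability measure on $S^1$.

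By the DKN structural theorem, $G$ then has a unique minimal invariant set $\Lambda \subseteq S^1$ on which the action is expanding: there exists $g \in G$ with a hyperbolic fixed point $x \in \Lambda$, i.e. $g(x)=x$ and $g'(x) \ne 1$. Using standard local dynamics (attracting/repelling behavior and isolation of fixed points under iteration of an expanding map), any element of $\Diff^1_+(S^1)$ commuting with $g$ must fix $x$: if $h$ commutes with $g$ then $h(x)$ is again fixed by $g$, and the hyperbolic condition forces $h(x)=x$. Since the factors $G$ and $H$ commute, every $h \in H$ fixes $x$. The derivative map $h \mapsto h'(x)$ is then a homomorphism $H \to \R_{>0}$, whose image commutes with $g'(x)$; since $H$ is perfect and $\R_{>0}$ is abelian, this map is trivial. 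Theorem \ref{theorem:thurston_stability} applied to $H$ at $x$ then asserts that $H$ is locally indicable, contradicting perfectness of $H$.

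The main obstacle will be the dynamical analysis in the non-measure case: both the precise extraction of a hyperbolic fixed point from the DKN expansion dichotomy, and the verification in the $C^1$ category (where Sternberg linearization is unavailable) that any element commuting with a hyperbolic $C^1$ diffeomorphism must fix the hyperbolic point. This rigidity of centralizers of expanding $C^1$ maps is the technical heart of the argument and is precisely the point at which Parwani's work builds on the Deroin--Kleptsyn--Navas machinery.
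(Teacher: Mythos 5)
The paper does not prove Theorem~\ref{theorem:parwani}; it is imported verbatim as a citation to Parwani's Theorem~1.4 and used as a black box, so there is no internal proof to compare against. With that caveat, here is an assessment of your reconstruction on its own terms.

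Your preliminary reduction is correct and in the right spirit: for a finitely generated perfect group preserving a probability measure on $S^1$, the rotation-number homomorphism to $\R/\Z$ vanishes, each generator fixes $\mathrm{supp}(\mu)$ pointwise, and one finishes by Thurston stability (the derivative homomorphism to $\R_{>0}$ is trivial by perfectness, so the group sits inside the locally indicable group $\ca G$ of Theorem~\ref{theorem:thurston_stability} and must therefore be trivial). That chain of reductions is standard and sound.

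The gap is in the non-measure case, specifically the centralizer step. You assert that if $h$ commutes with $g$ and $x$ is a hyperbolic fixed point of $g$, then ``the hyperbolic condition forces $h(x)=x$.'' This is false as a general statement about $\Diff^1_+(S^1)$: a diffeomorphism $g$ can have several fixed points of the same hyperbolic type, and a commuting $h$ may permute them. Concretely, if $g$ has $2m$ alternating attracting/repelling fixed points arranged so that $g$ is equivariant under the rigid rotation $R$ by $2\pi/m$, then $R$ commutes with $g$ yet moves every one of $g$'s hyperbolic fixed points. What commutativity actually gives is that $h$ preserves the set of attracting (resp.\ repelling) fixed points of $g$; promoting this to a common fixed point for all of $H$ requires genuinely more of the Deroin--Kleptsyn--Navas structure theory (the global description of the minimal set and the expansion along it, not merely the existence of one hyperbolic periodic point) than your outline invokes. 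This is exactly where the technical weight of Parwani's argument lies, and the step as you have written it does not close. You flag this yourself in your final paragraph, but the mechanism you propose (``rigidity of centralizers of a single expanding map'') is not the right one; the resolution has to use the interplay between both commuting factors and the DKN contraction machinery, not centralizer rigidity of a single element.
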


\subsection{Surfaces} Let $S$ be a closed surface and let $X\sbs S$ be finite. Let $S'$ be the compact surface obtained by replacing each marked point $x\in X$ with a boundary component. In what follows, $\Diff(S')$ denotes the group of diffeomorphisms of $S'$ where the boundary components of $S'$ are \emph{not} required to be fixed pointwise. It is well-known that $\pi_0\Diff(S,X)\cong\pi_0\Diff(S')$. Therefore, one can ask whether the homomorphism
\begin{equation}\label{eqn:lift1}\ca P:B_n(S)\ra\pi_0\Diff(S,X)\cong\pi_0\Diff(S')\end{equation}
lifts to a homomorphism $B_n(S)\ra\Diff(S')$. 

\begin{theorem}
Fix $n\ge11$. Then $\ca P:B_n(S)\ra\pi_0\Diff(S')$ is not realized by diffeomorphisms. That is, there does not exist a homomorphism $B_n(S)\ra\Diff(S')$ such that the composition $B_n(S)\ra\Diff(S')\ra\pi_0\Diff(S')$ is equal to $\ca P$. 
\end{theorem}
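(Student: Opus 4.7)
Suppose for contradiction that $\sigma \colon B_n(S) \to \Diff(S')$ is a lift of $\mathcal P$. Compared to the proof of Theorem \ref{theorem:surfaces}, the new complication is that a subgroup of $B_n(S)$ stabilizing a marked point need not fix any point of $S'$; its image under $\sigma$ only preserves the corresponding boundary circle \emph{setwise} and can rotate it. The plan is to use Parwani's theorem (Theorem \ref{theorem:parwani}) to force this rotation to be trivial for a sufficiently large perfect subgroup, and then run the Thurston-stability argument of Theorem \ref{theorem:surfaces}.

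The first step is to produce two commuting perfect braid-type subgroups inside $B_n(S)$ that simultaneously stabilize a marked point. By Theorem \ref{theorem:parisrolfsen} (which for $S = S^2$ has central kernel, causing no harm since the subgroups below intersect the center trivially), the classical braid group $B_n$ embeds into $B_n(S)$. Viewing the $n \ge 11$ marked points as lying inside a disk $\disk \subset S$, we partition them into three sets---two of size five and one singleton $\{x\}$---and choose disjoint subdisks $D_1, D_2 \subset \disk$ enclosing the first two sets. The braids supported in $D_1$ and $D_2$ generate commuting copies of $B_5$, yielding an embedding $B_5 \times B_5 \hookrightarrow B_{10} \hookrightarrow B_n \hookrightarrow B_n(S)$ whose image stabilizes $x$. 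Via the identification $\pi_0 \Diff(S, X) \cong \pi_0 \Diff(S')$, the lift $\sigma$ sends $B_5 \times B_5$ into the subgroup of $\Diff(S')$ preserving the boundary component $\partial_x \cong S^1$ setwise; restriction to $\partial_x$ gives a $C^1$ action of $B_5 \times B_5$ on $S^1$. By Proposition \ref{theorem:gorinlin}, $[B_5, B_5]$ is finitely generated and perfect, so Parwani's theorem applies to the product $[B_5, B_5] \times [B_5, B_5]$ and forces one of the factors---without loss of generality $G := [B_5, B_5] \times \{1\}$---to act trivially on $\partial_x$.

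Fix any $p \in \partial_x$. Every $g \in G$ fixes $p$, so taking derivatives assembles into a homomorphism $D \colon G \to \GL_2^+(\R)$. In local coordinates $(\theta, r)$ tangent and normal to $\partial_x$ with $r \ge 0$, such a $g$ fixes $\partial_x$ pointwise and preserves the half-plane, so $Dg|_p$ has the form $\begin{pmatrix} 1 & * \\ 0 & * \end{pmatrix}$ with positive lower-right entry. These matrices form a solvable subgroup $H \le \GL_2^+(\R)$ whose commutator subgroup is the abelian group of unipotent matrices $\begin{pmatrix} 1 & * \\ 0 & 1 \end{pmatrix}$; consequently every perfect subgroup of $H$ is trivial, and hence $D(G) = \{I\}$. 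Thus $G$ fixes $p$ with trivial derivative, so the Thurston stability theorem forces $G$ to be locally indicable---contradicting the fact that $G$ is a nontrivial perfect group (nontriviality follows because $\ker \mathcal P$ is abelian by Theorem \ref{theorem:birman}, whence the perfect group $[B_5, B_5]$ injects into $\pi_0 \Diff(S, X)$ and therefore also $\sigma([B_5, B_5])$ is nontrivial).

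The main obstacle is replacing the ``find a fixed point on $\partial S$'' step available in Theorem \ref{theorem:surfaces}. Parwani's theorem plays exactly this role, but it demands two commuting finitely generated perfect subgroups; producing these inside a stabilizer of a marked point of $B_n(S)$ is what forces the bound $n \ge 5 + 5 + 1 = 11$.
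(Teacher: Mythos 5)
Your proposal is correct and follows essentially the same route as the paper: embed $B_5\times B_5$ into the finite-index subgroup of $B_n(S)$ stabilizing a marked point, restrict the hypothetical lift to the corresponding boundary circle of $S'$, apply Parwani's theorem to $[B_5,B_5]\times[B_5,B_5]$ to get a factor $G$ acting trivially on that circle, and then invoke Thurston stability to contradict the perfectness of $G$. The one place where you are more careful than the paper's terse reference to ``the last stage of the argument of Theorem \ref{theorem:spheres}'' is the derivative computation: since the fixed boundary circle has codimension $1$ in $S'$ (rather than the codimension-$2$ situation of Proposition \ref{proposition:B}), the derivative at a point of $C$ lands in the metabelian group $\left\{\left(\begin{smallmatrix}1&*\\0&*\end{smallmatrix}\right)\right\}\cap\GL_2^+(\R)$, which has no nontrivial perfect subgroups, and you spell this out cleanly where the paper leaves the adaptation implicit. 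You also correctly note that the central kernel in the $S=S^2$ case of Theorem \ref{theorem:parisrolfsen} meets the geometrically embedded $[B_5,B_5]\times[B_5,B_5]$ trivially, a point the paper glosses over.
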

\begin{proof}
Suppose for a contradiction that $\si:B_n(S)\ra\Diff(S')$ is a lift of (\ref{eqn:lift1}). By passing to a finite-index subgroup of $B_n(S)$ we may assume one component $C\subset\pa S'$ is fixed. By the assumption $n\ge11$, this finite-index subgroup contains $B_5\ti B_5$. We may therefore take $G=[B_5,B_5]\times1$ and $H=1\times[B_5,B_5]$ in Theorem \ref{theorem:parwani} to conclude that, without loss of generality, $G$ acts trivially on $C$. As $G$ is non-locally-indicable (Theorem \ref{theorem:gorinlin}), the last stage of the argument of Theorem \ref{theorem:spheres} for the case $M = S^{k+2}$ can be applied to derive a contradiction.

\end{proof}

\subsection{Spheres}

Let $\Emb_n(S^k,\re^{k+2};\phi)$ be the embedding space defined in Section \ref{section:spherepush}. Define the \emph{(unparameterized) configuration space} $\overline{\Conf}_n(S^k,S^{k+2})$ as
\[\overline{\Conf}_n(S^k,S^{k+2})=\Emb_n(S^k,S^{k+2};\phi)/\Diff(\coprod_nS^k).\]
Note that $\overline{\Conf}_n(S^k,S^{k+2})$ is a quotient of $\Conf_n(S^k,S^{k+2})$, since $\Diff(\coprod_nS^k)$ is isomorphic to the wreath product $\Diff(S^k)\wr S_n$. An element of $\overline{\Conf}_n(S^k,S^{k+2})$ is a collection of disjoint, unordered, unlinked spheres (with no additional information about the parameterization). 

Fix $X\in\overline{\Conf}_n(S^k,S^{k+2})$, and let $\bar B_n(S^k,S^{k+2})=\pi_1\big(\overline{\Conf}_n(S^k,S^{k+2}),X\big)$. In the case $k=1$, this group coincides with the {\em ring group} studied by Brendle and Hatcher in \cite{brendle_hatcher}. By the argument in Proposition \ref{proposition:ringpush}, there is a homomorphism
\[\ca P:\bar B_n(S^k,S^{k+2})\ra\pi_0\big(\ca D(S^{k+2},X)\big),\]
where $\ca D(S^{k+2},X)\le\ca D(S^{k+2})$ is the subgroup of diffeomorphisms that preserve $X$ as a set. 

We have the following strengthening of Theorem \ref{theorem:spheres} in the case $k=1$. 
\begin{theorem}
Fix $n\ge15$. Then the homomorphism $\ca P:\bar B_n(S^1,S^3)\ra\pi_0\big(\ca D(S^{3},X)\big)$ is not realized by diffeomorphisms. 
\end{theorem}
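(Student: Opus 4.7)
The plan is to extend the strategy of the previous theorem (the surface case with $n \ge 11$) by applying Theorem \ref{theorem:parwani} in two stages rather than one. Suppose, for contradiction, that $\sigma: \bar B_n(S^1,S^3) \to \ca D(S^3,X)$ is a lift of $\ca P$. Passing to a finite-index subgroup $\Gamma$ of $\bar B_n(S^1,S^3)$ via the natural quotient onto $\Z/2 \wr S_n$, I may assume that one distinguished ring $C \in X$ is fixed setwise and orientation-preservingly, so restriction yields a $C^1$ action $\Gamma \to \Diff^+(C)$ with $C \cong S^1$. Inside $\Gamma$, using the braid subgroup embedding $B_n \hookrightarrow \bar B_n(S^1,S^3)$ of Proposition \ref{proposition:hasbraids} together with the standard $B_5 \times B_5 \times B_5 \hookrightarrow B_{15}$ on three disjoint blocks, the bound $n \ge 15$ is tuned to produce three pairwise-commuting copies $B_5^{(1)}, B_5^{(2)}, B_5^{(3)} \le \Gamma$. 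Set $P_i \coloneq [B_5^{(i)}, B_5^{(i)}]$, each of which is finitely generated and perfect by Proposition \ref{theorem:gorinlin}.

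First apply Theorem \ref{theorem:parwani} to the action of $P_1 \times P_2 \times P_3$ on $C$. Applied to each of the decompositions $(P_i \times P_j) \times P_k$, Parwani forces at least two of $P_1, P_2, P_3$---say $P_1$ and $P_2$---to act trivially on $C$, so $\sigma(P_1 \times P_2)$ fixes $C$ pointwise. Fix $p \in C$: as in the proof of Proposition \ref{proposition:B}, the derivative at $p$ yields a homomorphism $A: \sigma(P_1 \times P_2) \to \GL_2^+(\R)$ through the action on the $2$-dimensional normal fiber $N_pC$. Composing $A$ with the canonical surjection $\GL_2^+(\R) \twoheadrightarrow \PSL_2(\R)$ (quotient by the scalar center) and the standard $\PSL_2(\R)$-action on $\R\P^1 \cong S^1$ produces a second $C^1$ action of $P_1 \times P_2$ on a circle, and a second application of Theorem \ref{theorem:parwani} forces one factor, say $P_1$, to act trivially on this projective normal circle.

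It follows that $A(\sigma(P_1))$ lies in the scalar center $\R_{>0}^\times \cdot I$ of $\GL_2^+(\R)$, which is abelian; since $P_1$ is perfect, $A(\sigma(P_1)) = \{I\}$. The remaining off-diagonal part $g \mapsto c_g$ of the derivative matrix is then a homomorphism into the abelian group $\R^2$, so it also vanishes on the perfect group $P_1$. Therefore every $\sigma(g)$, $g \in P_1$, has the identity as its derivative at $p$, and Theorem \ref{theorem:thurston_stability} declares $\sigma(P_1)$ locally indicable. On the other hand, since $\ker \sigma \subseteq \ker \ca P$ is abelian (Proposition \ref{proposition:ringpush}) and $P_1$ is perfect, $\sigma|_{P_1}$ is injective, so $\sigma(P_1)$ is a nontrivial finitely generated perfect subgroup---the desired contradiction.

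The main obstacle I anticipate is producing three pairwise-commuting copies of $B_5$ inside the stabilizer $\Gamma$ of a single ring for the optimal bound $n \ge 15$; naïve counting in the braid group suggests $n \ge 16$ (fifteen strands to braid plus one to serve as the fixed ring), and the improvement hinges on the geometric flexibility of arranging rings in clusters in $S^3$ (rather than strands along an interval). Verifying this compatibility is the most delicate bookkeeping in the argument; once it is in place, the subsequent double application of Parwani and invocation of Thurston stability proceeds in direct parallel to the surface case.
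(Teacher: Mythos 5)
Your strategy is a genuine alternative to the paper's. You apply Parwani's theorem twice---once to the restricted $S^1$-action on the fixed ring $C$, and once to the projectivized normal representation on $\R\P^1$---which lets you get by with perfect pieces $[B_5,B_5]$ rather than anything larger. The paper applies Parwani only once, to $[B_7,B_7]\times[B_7,B_7]$, and then handles the derivative step purely algebraically: the cyclic generating set $\{\sigma_i\sigma_{i+1}^{-1}\}_{i\in\Z/7\Z}$ of $[B_7,B_7]$ consists of seven mutually conjugate elements that commute at cyclic distance $\ge 3$, so Lemma \ref{lemma:general} with $k=3$ (and $7 = 2k+1$ exactly) shows that every homomorphism $[B_7,B_7]\to\GL_2^+(\R)$ has abelian, hence trivial, image. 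Your second Parwani application is a replacement for that algebraic lemma, and you need it precisely because $[B_5,B_5]$ has only five such generators and falls below the $2k+1$ threshold; the trick of projectivizing the normal fiber to $\R\P^1$ is a nice dynamical substitute.

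The difference matters for the bound, and here there is a genuine gap. Your configuration needs $B_5\times B_5\times B_5\le B_{15}$ inside the stabilizer of a distinguished ring, whereas the paper needs only $B_7\times B_7\le B_{14}$. Since the stabilizer of the last ring inside $j(B_n)\le\bar B_n(S^1,S^3)$ is $j(B_{n-1})$, your route as written requires $n-1\ge 15$, i.e.\ $n\ge 16$. You flag this yourself, but the proposed repair via ``geometric flexibility of arranging rings in clusters in $S^3$'' is not carried out, and I do not see how to produce three disjoint copies of $B_5$ inside the stabilizer of one ring on only fifteen rings. Unless that step is supplied, your argument proves the theorem only for $n\ge 16$, one worse than the stated $n\ge 15$. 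The paper's use of $B_7\times B_7$ plus Lemma \ref{lemma:general} is precisely the device that saves the extra strand.

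One minor correction regardless of the bound: the fact that $\ker\sigma$ is abelian and $P_1$ is perfect does not yield ``$\sigma|_{P_1}$ is injective,'' since perfect groups can have nontrivial abelian normal subgroups. What you actually need, and what does follow, is that $\sigma(P_1)$ is nontrivial (a nonabelian group cannot be contained in the abelian group $\ker\sigma$) and that it is perfect as a quotient of $P_1$; that is all Thurston stability requires to produce the contradiction.
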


\begin{proof}
Suppose for a contradiction that $\sigma: \bar B_n(S^1,S^3) \to \ca D(S^3, X)$ is a lift of $\mathcal P$. By the same argument as Proposition \ref{proposition:hasbraids}, there is a homomorphism $B_n\ra\bar B_n(S^1,S^3)$ with kernel contained in $Z(B_n)$. By passing to a finite-index subgroup of $\bar B_n(S^1,S^3)$, we may assume that one component $C \cong S^1 \subset X$ is fixed. By the assumption $n \ge 15$, this finite-index subgroup contains $B_7 \times B_7$ and {\em a fortiori} contains $[B_7, B_7] \times [B_7, B_7]$. Taking $G = [B_7,B_7] \times 1$ and $H= 1 \times [B_7, B_7]$ in Theorem \ref{theorem:parwani}, it follows that (without loss of generality) $G$ fixes $C$ pointwise.

For the remainder of the argument, we follow the strategy in Step 2 of Theorem \ref{theorem:spheres}. In order to be able to derive a contradiction from Proposition \ref{proposition:B}, we must have that every homomorphism $f: [B_7, B_7] \to \GL_2^+(\R)$ has abelian image.

The generating set $S$ of Proposition \ref{theorem:gorinlin}.(ii) satisfies the hypotheses of Lemma \ref{lemma:general} for $k = 3$. It follows that every homomorphism $f: [B_7, B_7] \to \GL_2^+(\R)$ has abelian image as desired. The argument in Step 2 of Theorem \ref{theorem:spheres} can now be carried out showing that $[B_7, B_7]\times 1 \le\bar B_n(S^1,S^3)$ lies in the kernel of any homomorphism $\sigma: \bar B_n(S^1,S^3) \to \ca D(S^3, X)$. Therefore $\bar B_n(S^1,S^3)$ cannot be realized by diffeomorphisms. 
\end{proof}

\bibliography{braids}{}
\bibliographystyle{alpha}

\end{document}